\theoremstyle{plain}
\newtheorem{theorem}{Theorem}[section]
\newtheorem{prop}[theorem]{Proposition}
\newtheorem{cor}[theorem]{Corollary}
\newtheorem{lemma}[theorem]{Lemma}
\theoremstyle{definition}
\newtheorem{remark}[theorem]{Remark}
\newtheorem{example}[theorem]{Example}
\newcommand{\T}{\mathcal{T}}
\newcommand{\Aut}{\operatorname{Aut}}
\title{Isomorphisms, Moduli, and Cohomological Dimension\\ for Twisted Triangular Banach Algebras}
\author[1]{Sara Behnamian}
\author[2]{Fatemeh Fogh}
\affil[1]{Globe Institute, University of Copenhagen, \mbox{Øster Voldgade 5--7}, 1350 Copenhagen K, Denmark\\
\texttt{sara.behnamian@sund.ku.dk}}
\affil[2]{Department of Mathematical Sciences, Florida Atlantic University, Boca Raton, FL 33431, USA\\
\texttt{ffogh2021@fau.edu}}
\date{} % no date
\begin{document}
\maketitle
\begin{abstract}
We introduce and study \emph{twisted triangular Banach algebras} 
\(\T_\sigma(A,B;X)\), built from Banach algebras \(A,B\), a Banach 
\(A\)--\(B\) bimodule \(X\), and a pair of automorphisms 
\(\sigma=(\sigma_A,\sigma_B)\). This construction extends the classical 
triangular framework by incorporating twisted module actions on the 
off--diagonal block. We obtain a complete isomorphism classification: 
\(\T_\sigma\simeq\T_\tau\) precisely when the diagonal twists are conjugate, 
the bimodule admits an \((\alpha,\beta)\)--equivariant isomorphism, and a shear 
cocycle satisfies a natural identity. In the case of group algebras, the 
classification detects conjugacy classes inside \(\Aut(G)\), yielding new 
dynamical invariants absent from the untwisted setting. On the homological 
side, we establish sharp bounds for the Hochschild cohomological dimension and 
deduce that \(\T_\sigma\) is amenable only when \(X=0\) and both $A,B$ are 
amenable. Thus twisting enriches the classical theory while preserving 
extension--theoretic control of cohomology.
\end{abstract}
% \keywords{Twisted triangular Banach algebra, algebra isomorphism, shear cocycle, 
% automorphism group, Hochschild cohomology, cohomological dimension, amenability, 
% weak amenability, Banach bimodule, group algebra}

% --- either right after \maketitle or just after the abstract ---
\begingroup
\renewcommand\thefootnote{}%
\footnotetext{MSC 2020: 47H10, 47H09; additionally 46H25, 16E40, 46H20.}

\addtocounter{footnote}{-1}%
\endgroup

% ------------------------------------------------------------

\section{Preliminaries}

Throughout we fix a scalar field \(\mathbb{K}\in\{\mathbb{R},\mathbb{C}\}\).
All algebras are Banach \(\mathbb{K}\)-algebras, all homomorphisms are bounded and \(\mathbb{K}\)-linear, and all tensor products are understood in the completed projective sense unless stated otherwise.
We follow Helemskii’s conventions for continuous Hochschild cohomology and related homological notions \cite{HelemskiiHomology}.

Let \(A\) and \(B\) be Banach algebras and let \(X\) be a Banach \(A\)--\(B\) bimodule with bounded left and right actions.
For a pair of algebra automorphisms \(\sigma=(\sigma_A,\sigma_B)\in \Aut(A)\times \Aut(B)\) we define the \emph{twisted triangular Banach algebra}
\[
\T_\sigma(A,B;X)
=\left\{\begin{bmatrix}a&x\\[2pt]0&b\end{bmatrix}: a\in A,\ x\in X,\ b\in B\right\},
\qquad
\left\|\begin{bmatrix}a&x\\[2pt]0&b\end{bmatrix}\right\|=\|a\|+\|x\|+\|b\|,
\]
with multiplication
\[
\begin{bmatrix}a&x\\[2pt]0&b\end{bmatrix}
\cdot_\sigma
\begin{bmatrix}a'&x'\\[2pt]0&b'\end{bmatrix}
=
\begin{bmatrix}
aa' & \sigma_A(a)\cdot x' + x\cdot \sigma_B(b')\\[2pt]
0   & bb'
\end{bmatrix}.
\]
Associativity of \(\cdot_\sigma\) follows from the bimodule axioms and multiplicativity of \(\sigma_A,\sigma_B\): for arbitrary triples one has
\[
\bigl((a,x,b)\cdot_\sigma(a',x',b')\bigr)\cdot_\sigma(a'',x'',b'')
=
(a,x,b)\cdot_\sigma\bigl((a',x',b')\cdot_\sigma(a'',x'',b'')\bigr),
\]
since the off-diagonal term on each side equals
\(\sigma_A(a)\cdot(\sigma_A(a')\cdot x'' + x'\cdot\sigma_B(b'')) + x\cdot \sigma_B(b'b'')\).
Boundedness of the product with respect to the \(\ell^1\)-type norm is immediate from the bounded module actions and the boundedness of \(\sigma_A,\sigma_B\).
When \(\sigma=(\mathrm{id}_A,\mathrm{id}_B)\) the construction reduces to the usual triangular Banach algebra \cite{ForrestMarcouxWA,MoslehianTriangular}; the twisted off-diagonal interaction is in the spirit of \cite{BehnamianMahmoodi2020}. See also \cite{BehnamianMahmoodi2021} for Arens regularity of triangular algebras with modified multiplications.

An essential structural feature of \(\T_\sigma\) is the closed ideal
\[
I=\left\{\begin{bmatrix}0&x\\[2pt]0&0\end{bmatrix}: x\in X\right\},
\]
which satisfies \(I^2=0\).
The diagonal quotient is naturally
\[
Q=A\oplus_1 B \ \cong\ \left\{\begin{bmatrix}a&0\\[2pt]0&b\end{bmatrix}: a\in A,\ b\in B\right\},
\]
and there is a short exact sequence of Banach algebras
\[
0\longrightarrow I \longrightarrow \T_\sigma \xrightarrow{\ \pi\ } Q \longrightarrow 0,
\qquad
\pi\!\begin{bmatrix}a&x\\[2pt]0&b\end{bmatrix}=(a,b).
\]
If \(A\) and \(B\) are unital we write
\(
e_{11}=\begin{bmatrix}1_A&0\\[2pt]0&0\end{bmatrix}
\)
and
\(
e_{22}=\begin{bmatrix}0&0\\[2pt]0&1_B\end{bmatrix}.
\)
When \(A,B\) merely have bounded approximate identities, we tacitly work in the multiplier algebras to speak of these idempotents.
In either case one has the Peirce decomposition
\[
\T_\sigma = e_{11}\T_\sigma e_{11}\ \oplus\ e_{11}\T_\sigma e_{22}\ \oplus\ e_{22}\T_\sigma e_{22},
\]
with \(e_{11}\T_\sigma e_{11}\cong A\), \(e_{22}\T_\sigma e_{22}\cong B\), \(e_{11}\T_\sigma e_{22}\cong X\), and \(e_{22}\T_\sigma e_{11}=0\).

Given \(\alpha\in\Aut(A)\) and \(\beta\in\Aut(B)\), a bounded bijection \(u:X\to X\) is an \(A\)--\(B\) bimodule isomorphism \emph{relative to \((\alpha,\beta)\)} if
\[
u(a\cdot x\cdot b)=\alpha(a)\cdot u(x)\cdot\beta(b)\qquad(a\in A,\ b\in B,\ x\in X).
\]
The set of such maps forms a group \(\Aut_{A,B}(X)\) under composition.
For a bounded bilinear map \(\theta:A\times B\to X\) define
\[
\Lambda_\theta:\T_\sigma\to \T_\sigma,\qquad
\Lambda_\theta\!\begin{bmatrix}a&x\\[2pt]0&b\end{bmatrix}
=
\begin{bmatrix}a&x+\theta(a,b)\\[2pt]0&b\end{bmatrix}.
\]
A direct computation shows that \(\Lambda_\theta\) is an algebra homomorphism if and only if \(\theta\) satisfies the \(\sigma\)-\emph{cocycle identity}
\begin{equation}\label{eq:cocycle}
\theta(aa',bb')=\sigma_A(a)\cdot \theta(a',b')+\theta(a,b)\cdot \sigma_B(b')
\qquad(a,a'\in A,\ b,b'\in B).
\end{equation}
We denote by \(Z^1_\sigma(A,B;X)\) the additive group of all \(\sigma\)-cocycles.
If \(A,B\) are unital (or when working in multiplier algebras), each \(\eta\in X\) defines an inner shear
\(
\theta_\eta(a,b)=\sigma_A(a)\cdot \eta-\eta\cdot \sigma_B(b),
\)
and \(B^1_\sigma(A,B;X):=\{\theta_\eta:\eta\in X\}\subseteq Z^1_\sigma(A,B;X)\).

We work throughout with continuous Hochschild cohomology in the sense of Helemskii; the cohomological dimension \(\mathrm{cd}(\,\cdot\,)\) is taken in that framework \cite[Sec.~5, Sec.~31]{HelemskiiHomology}.
For amenability, we recall that a Banach algebra \(A\) is amenable if every bounded derivation \(D:A\to E^*\) into the dual of a Banach \(A\)-bimodule \(E\) is inner; equivalently, \(H^1(A,E^*)=0\) for every \(E\) \cite{JohnsonMemoir}.
In the presence of a bounded approximate identity this is further equivalent to the existence of a bounded approximate diagonal \cite{JohnsonAJM}.
These conventions will be used later for cohomological and amenability considerations.
Triangular Banach algebras and their homology have been studied in depth; see, for example,
Forrest--Marcoux on weak amenability \cite{ForrestMarcouxWA} and Moslehian \cite{MoslehianTriangular}.
Our contribution shows that the basic structural picture survives under twisting by automorphisms:
the isomorphism classification of $\T_\sigma(A,B;X)$ proceeds by diagonal conjugacy, bimodule
equivalences, and shear cocycles, and the square--zero extension $0\to I\to \T_\sigma\to A\oplus B\to 0$
still controls cohomological dimension. The present paper emphasizes two points that appear not to
be recorded explicitly in the literature: (i) a transparent transport-of-structure formulation for the
shear cocycle (Lemma~\ref{lem:transport}), clarifying why the cocycle may be written relative to $\tau$
rather than $\sigma$; and (ii) a sharp, extension-theoretic bound for $\mathrm{cd}(\T_\sigma)$ valid in the
Banach setting, with an acyclicity hypothesis that forces equality (Theorem~\ref{thm:cd}),
linking directly to Helemskii's exact sequence framework \cite{HelemskiiHomology}.
At the level of examples, we indicate how the classical dynamics behind $\Aut(C(K))$ and
$\Aut(L^1(G))$ (Banach--Stone \cite{GillmanJerison}, Wendel \cite{Wendel}) feed into the twisted
classification for commutative and group-algebra cases.

\section{Isomorphisms and automorphisms}

In this section we classify algebra isomorphisms between twisted triangular Banach algebras.
A preliminary observation is that the square--zero ideal \(I\) is \emph{characteristic} under mild hypotheses.
Assume \(A\) and \(B\) are unital, or more generally that they admit bounded approximate identities so that Peirce idempotents can be handled in the multiplier setting.
Then \(e_{11}\) and \(e_{22}\) are complementary idempotents with \(e_{22}\T_\sigma e_{11}=0\) and \(e_{11}\T_\sigma e_{22}\cong X\).
This asymmetric Peirce pattern determines the ordered pair \((e_{11},e_{22})\) up to conjugacy: if \(p,q\) are complementary idempotents with \(q\T_\sigma p=0\) and \(p\T_\sigma q\neq 0\), then \(p\T_\sigma p\cong A\), \(q\T_\sigma q\cong B\), and \(p\T_\sigma q\cong X\), so \((p,q)\) is equivalent to \((e_{11},e_{22})\).
Consequently any algebra isomorphism \(\Phi:\T_\sigma\to \T_\tau\) must carry \(I=e_{11}\T_\sigma e_{22}\) onto \(I=e_{11}\T_\tau e_{22}\) whenever \(X\neq 0\); in particular \(\Phi(I)=I\), and there is a well-defined induced map \(\bar\Phi:Q\to Q\) determined by \(\bar\Phi\circ\pi=\pi\circ\Phi\).
When \(X=0\) the algebra degenerates to \(A\oplus B\), and in that special case an isomorphism may also interchange the two diagonal summands when \(A\cong B\).
\begin{lemma}[Characteristic square--zero ideal]\label{lem:characteristic-I}
Assume $A$ and $B$ are unital or have bounded approximate identities (so that the Peirce
idempotents are defined in the multiplier sense). Let $I=e_{11}\T_\sigma e_{22}$.
If $\Phi:\T_\sigma\to \T_\tau$ is a Banach algebra isomorphism and $X\neq 0$, then $\Phi(I)=I$.
\end{lemma}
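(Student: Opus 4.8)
The plan is to transport the Peirce idempotents through $\Phi$ and then read off $\Phi(I)$ from the asymmetric corner relations recorded just before the statement. First I would use that an isomorphism of unital Banach algebras preserves the identity, so from $e_{11}+e_{22}=1$ we get $\Phi(e_{11})+\Phi(e_{22})=1$ in $\T_\tau$; in the bounded-approximate-identity case the same holds after extending $\Phi$ to the multiplier algebras, where $e_{11},e_{22}$ live. Writing $p:=\Phi(e_{11})$ and $q:=\Phi(e_{22})$, multiplicativity of $\Phi$ together with $e_{11}^2=e_{11}$, $e_{22}^2=e_{22}$ and $e_{11}e_{22}=e_{22}e_{11}=0$ shows that $p,q$ are complementary orthogonal idempotents of $\T_\tau$.

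Next I would transport the corner relations. Since $\Phi$ is a bijective homomorphism onto $\T_\tau$, for the Peirce corners one has $\Phi(e_{11}\T_\sigma e_{22})=p\,\T_\tau\,q$ and $\Phi(e_{22}\T_\sigma e_{11})=q\,\T_\tau\,p$. Using $e_{22}\T_\sigma e_{11}=0$ and $e_{11}\T_\sigma e_{22}=I\cong X$ from the Peirce decomposition, this gives
\[
q\,\T_\tau\,p=\Phi(0)=0,\qquad p\,\T_\tau\,q=\Phi(I),
\]
and injectivity of $\Phi$ together with $X\neq0$ forces $p\,\T_\tau\,q\neq0$. Thus $(p,q)$ is a pair of complementary orthogonal idempotents in $\T_\tau$ exhibiting exactly the asymmetric pattern $q\T_\tau p=0\neq p\T_\tau q$. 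This is the point at which the hypothesis $X\neq0$ is indispensable: it is what excludes the orientation-reversing alternative, in which $\Phi$ would swap the diagonal blocks and one would instead need $q\T_\tau p\neq0$.

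By the asymmetric Peirce classification stated before the lemma, such a pair is equivalent to the standard pair, i.e.\ there is an invertible $u$ (in $\T_\tau$, or in its multiplier algebra) with $p=u\,e_{11}\,u^{-1}$ and $q=u\,e_{22}\,u^{-1}$; the same $u$ serves for both since $q=1-p$ and $e_{22}=1-e_{11}$. Substituting and using $u^{-1}\T_\tau u=\T_\tau$ yields
\[
\Phi(I)=p\,\T_\tau\,q=u\,(e_{11}\T_\tau e_{22})\,u^{-1}=u\,I\,u^{-1}.
\]
Finally I would invoke that $I$ is a two-sided ideal of $\T_\tau$ and is therefore invariant under every inner automorphism, so $u\,I\,u^{-1}=I$ and hence $\Phi(I)=I$.

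The step I expect to carry the real weight is the equivalence of idempotent pairs, i.e.\ the asymmetric Peirce classification itself. Unwinding it, a complementary idempotent $p=\begin{bmatrix}a&\xi\\0&b\end{bmatrix}$ has $a,b$ idempotent, and the vanishing $q\T_\tau p=0$ only yields the one-sided triangularity conditions $(1-a)Aa=0$, $(1-b)Bb=0$ together with the module identities $\tau_A(1-a)\cdot\xi=0$, $\xi\cdot\tau_B(b)=0$ and $\tau_A(1-a)\cdot X\cdot\tau_B(b)=0$. The work is to straighten the diagonal to $(1_A,0)$ and $(0,1_B)$ — using the rigidity imposed by the corner isomorphisms $p\T_\tau p\cong A$, $q\T_\tau q\cong B$ coming from $\Phi$, together with the nonvanishing $p\T_\tau q\neq0$ to pin the orientation — after which the residual shift $\xi$ is absorbed by conjugating with $\begin{bmatrix}1_A&-\xi\\0&1_B\end{bmatrix}$, producing the conjugator $u$ above. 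Everything after that straightening is formal, the crucial structural fact being simply that two-sided ideals are stable under inner automorphisms.
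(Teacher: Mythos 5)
You follow the same route as the paper's own proof: transport the Peirce idempotents to $p=\Phi(e_{11})$, $q=\Phi(e_{22})$, record the asymmetric pattern $q\,\T_\tau\,p=0\neq p\,\T_\tau\,q$, and then invoke a classification of such idempotent pairs. Your finish (conjugate $(p,q)$ to $(e_{11},e_{22})$ by an invertible $u$, then use that the two-sided ideal $I$ is invariant under inner automorphisms) is a cleaner landing than the paper's one-line assertion that the pattern ``forces'' $p\,\T_\tau\,q=e_{11}\T_\tau e_{22}$. However, the step you yourself flag as carrying the real weight --- the asymmetric Peirce classification --- is exactly where both your argument and the paper's are incomplete, and it cannot be repaired: the claim is false in this generality. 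As your own unwinding shows, $q\,\T_\tau\,p=0$ yields only the \emph{one-sided} conditions $(1-a)Aa=0$ and $(1-b)Bb=0$ on the diagonal parts $a,b$ of $p$; nothing forces $aA(1-a)=0$. If $A$ or $B$ contains an idempotent that is one-sidedly triangular in this sense --- e.g.\ $E_{11}\in T_2(\mathbb{C})$, the upper triangular $2\times 2$ matrices, where $E_{22}T_2(\mathbb{C})E_{11}=0$ but $E_{11}T_2(\mathbb{C})E_{22}\neq 0$ --- then pairs $(p,q)$ with the correct pattern, and even with corner algebras isomorphic to $A$ and $B$ via a genuine global isomorphism, need not be conjugate to $(e_{11},e_{22})$, and $p\,\T_\tau\,q$ need not lie in $I$.

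Indeed the lemma itself fails, so no strategy can close this gap without extra hypotheses. Take trivial twists and
\[
A=T_2(\mathbb{C})\oplus\mathbb{C},\qquad B=\mathbb{C}\oplus T_2(\mathbb{C}),\qquad X=\mathbb{C}^2_{\mathrm{col}}\oplus\mathbb{C}^2_{\mathrm{row}},
\]
with componentwise actions ($T_2(\mathbb{C})$ acts on columns from the left and on rows from the right, the scalar factors act by multiplication). Then $\T_\sigma(A,B;X)\cong T_3(\mathbb{C})\oplus T_3(\mathbb{C})$, the first summand sliced along rows $\{1,2\}$ versus $\{3\}$ and the second along $\{1\}$ versus $\{2,3\}$. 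The automorphism of $T_3(\mathbb{C})\oplus T_3(\mathbb{C})$ that swaps the two summands therefore induces an automorphism $\Phi$ of $\T_\sigma(A,B;X)$. It sends the element of $I$ occupying the $(1,2)$-entry of the second copy of $T_3(\mathbb{C})$ to the $(1,2)$-entry of the first copy, which in this presentation is the \emph{diagonal} element $((E_{12},0),0,0)\in A$; hence $\Phi(I)\not\subseteq I$, although $A,B$ are unital and $X\neq 0$. (The same $\Phi$ also contradicts the conclusion of Theorem~\ref{thm:iso}, whose proof rests on this lemma.) The statement becomes true --- and your straightening argument then works essentially verbatim --- under an additional hypothesis ruling out such re-slicing, for instance that $A$ and $B$ have no idempotents other than $0$ and $1$: in that case your relations force $(a,b)=(1_A,0)$, so $p=\bigl[\begin{smallmatrix}1&\xi\\0&0\end{smallmatrix}\bigr]$, and conjugation by the unipotent element $\bigl[\begin{smallmatrix}1&-\xi\\0&1\end{smallmatrix}\bigr]$ completes the proof exactly as you describe.
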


\begin{proof}
Let $p=\Phi(e_{11})$ and $q=\Phi(e_{22})$. Then $p$ and $q$ are complementary idempotents in $\T_\tau$.
Moreover $\Phi(e_{22}\T_\sigma e_{11})=\Phi(0)=0$, hence $q\,\T_\tau\,p=0$, while
$\Phi(e_{11}\T_\sigma e_{22})=\Phi(I)\neq 0$ because $X\neq 0$, so $p\,\T_\tau\,q\neq 0$.
Thus $(p,q)$ has the same asymmetric Peirce pattern as $(e_{11},e_{22})$.
By the standard Peirce decomposition of $\T_\tau$, this forces $p\,\T_\tau\,q=e_{11}\T_\tau e_{22}=I$,
whence $\Phi(I)=I$.
\end{proof}

We now state the classification theorem.

\begin{theorem}\label{thm:iso}
Let \(\sigma=(\sigma_A,\sigma_B)\) and \(\tau=(\tau_A,\tau_B)\) be twists in \(\Aut(A)\times\Aut(B)\).
Assume \(A,B\) are unital or, more generally, possess bounded approximate identities (so that the Peirce decomposition is available in the multiplier sense).
There exists a bounded algebra isomorphism
\[
\Phi:\T_\sigma(A,B;X)\longrightarrow \T_\tau(A,B;X)
\]
if and only if there are \(\alpha\in\Aut(A)\), \(\beta\in\Aut(B)\), an \((\alpha,\beta)\)--bimodule isomorphism \(u:X\to X\), and a bounded bilinear map \(\theta:A\times B\to X\) satisfying
\begin{equation}\label{eq:diag-conj}
\tau_A=\alpha\circ \sigma_A\circ \alpha^{-1},
\qquad
\tau_B=\beta\circ \sigma_B\circ \beta^{-1},
\end{equation}
and the \(\tau\)--cocycle identity
\begin{equation}\label{eq:cocycle-tau}
\theta(aa',bb')=\tau_A(\alpha(a))\cdot \theta(a',b')+\theta(a,b)\cdot \tau_B(\beta(b'))
\qquad(a,a'\in A,\ b,b'\in B).
\end{equation}
In this situation one has
\[
\Phi\!\begin{bmatrix}a&x\\[2pt]0&b\end{bmatrix}
=
\begin{bmatrix}\alpha(a)&\ u(x)+\theta(a,b)\\[2pt]0&\beta(b)\end{bmatrix}.
\]
Conversely, any data \((\alpha,\beta,u,\theta)\) satisfying \eqref{eq:diag-conj}--\eqref{eq:cocycle-tau} determine a Banach algebra isomorphism by the displayed formula.
If \(X=0\), the same description holds up to the additional possibility of interchanging the two diagonal summands.
\end{theorem}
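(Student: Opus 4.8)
The plan is to handle the two implications separately. For \textbf{sufficiency}, given data $(\alpha,\beta,u,\theta)$ satisfying \eqref{eq:diag-conj}--\eqref{eq:cocycle-tau}, I would define $\Phi$ by the displayed formula and expand the off-diagonal entries of $\Phi(T\cdot_\sigma T')$ and $\Phi(T)\cdot_\tau\Phi(T')$. Rewriting $u(\sigma_A(a)\cdot x'+x\cdot\sigma_B(b'))$ as $\alpha(\sigma_A(a))\cdot u(x')+u(x)\cdot\beta(\sigma_B(b'))$ via the bimodule law, the coefficients of $u(x')$ and $u(x)$ agree because of the conjugacy relations \eqref{eq:diag-conj}, and the remaining terms agree because of the cocycle identity \eqref{eq:cocycle-tau}; thus the three hypotheses are exactly the content of multiplicativity. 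Boundedness is immediate, and $\Phi$ is bijective with inverse $\bigl[\begin{smallmatrix}a&x\\0&b\end{smallmatrix}\bigr]\mapsto\bigl[\begin{smallmatrix}\alpha^{-1}(a)&\,u^{-1}(x-\theta(\alpha^{-1}(a),\beta^{-1}(b)))\\0&\beta^{-1}(b)\end{smallmatrix}\bigr]$, whose continuity follows from the open mapping theorem. The degenerate case $X=0$ collapses both algebras to $A\oplus B$, whose isomorphisms are precisely $\Aut(A\oplus B)$, now permitting an interchange of summands when $A\cong B$.

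For \textbf{necessity}, assume $\Phi:\T_\sigma\to\T_\tau$ is a bounded isomorphism with $X\neq0$. By Lemma~\ref{lem:characteristic-I} we have $\Phi(I)=I$, so $\Phi$ descends to $\bar\Phi\in\Aut(A\oplus_1 B)$. Writing $p=\Phi(e_{11})$, $q=\Phi(e_{22})$, these are complementary idempotents with $q\,\T_\tau\,p=\Phi(e_{22}\T_\sigma e_{11})=0$ and $p\,\T_\tau\,q\cong X\neq0$, so they carry the asymmetric Peirce pattern of $(e_{11},e_{22})$; a swap would force the nonzero corner into $e_{22}\T_\tau e_{11}=0$, so the structural observation preceding Lemma~\ref{lem:characteristic-I} pins the orientation and gives $\pi(p)=(1_A,0)$, $\pi(q)=(0,1_B)$. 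Hence $p=\bigl[\begin{smallmatrix}1_A&\eta\\0&0\end{smallmatrix}\bigr]$ for some $\eta\in X$, and the shear $s=\bigl[\begin{smallmatrix}1&-\eta\\0&1\end{smallmatrix}\bigr]$ (invertible in $\T_\tau$, with bounded inverse) satisfies $\operatorname{Ad}(s)(e_{11})=p$ and $\operatorname{Ad}(s)(e_{22})=q$. Replacing $\Phi$ by $\Phi'=\operatorname{Ad}(s^{-1})\circ\Phi$ yields an isomorphism fixing both Peirce idempotents, hence respecting the decomposition corner by corner: I would read off $\alpha:=\Phi'|_{e_{11}\T_\sigma e_{11}}\in\Aut(A)$, $\beta:=\Phi'|_{e_{22}\T_\sigma e_{22}}\in\Aut(B)$, and a bounded linear bijection $u:=\Phi'|_{I}:X\to X$, with $\Phi'$ equal to the block map $\bigl[\begin{smallmatrix}a&x\\0&b\end{smallmatrix}\bigr]\mapsto\bigl[\begin{smallmatrix}\alpha(a)&u(x)\\0&\beta(b)\end{smallmatrix}\bigr]$ (the off-diagonal corner collapses to $u(x)$ alone because $\Phi'(e_{11}Te_{22})=e_{11}\Phi'(T)e_{22}$).

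Applying multiplicativity of $\Phi'$ to the mixed products $e_{11}Te_{11}\cdot_\sigma e_{11}T'e_{22}$ and $e_{11}Te_{22}\cdot_\sigma e_{22}T'e_{22}$ then yields the twisted intertwining relations $u(\sigma_A(a)\cdot x)=\tau_A(\alpha(a))\cdot u(x)$ and $u(x\cdot\sigma_B(b))=u(x)\cdot\tau_B(\beta(b))$. Undoing the normalization, $\Phi=\operatorname{Ad}(s)\circ\Phi'$ acquires the asserted form, the shear contributing the cocycle $\theta(a,b)=\tau_A(\alpha(a))\cdot\eta-\eta\cdot\tau_B(\beta(b))$; that this $\theta$ obeys \eqref{eq:cocycle-tau} is checked directly or read from the transport-of-structure identity of Lemma~\ref{lem:transport}, which is precisely what licenses writing the cocycle relative to $\tau$ rather than $\sigma$.

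I expect the \textbf{main obstacle} to be the passage from the twisted intertwining relations just obtained --- which only express that $u$ carries the $\sigma$-twisted $A$-action (resp.\ $B$-action) on $X$ to the $\tau$-twisted one through the corner automorphism $\alpha$ (resp.\ $\beta$) --- to the theorem's \emph{split} conclusion that $u$ is an honest untwisted $(\alpha,\beta)$-bimodule isomorphism \emph{together with} the abstract diagonal conjugacy \eqref{eq:diag-conj} in $\Aut(A)\times\Aut(B)$. Given the raw intertwining relation, untwisted bimodularity and the diagonal conjugacy become equivalent, but neither is automatic, and forcing the split form appears to require faithfulness or suitable nondegeneracy of the bimodule actions together with a careful choice of the diagonal automorphisms; absent such a hypothesis, $\sigma_A$ and $\tau_A$ need only act equivalently on $X$ rather than be genuinely conjugate in $\Aut(A)$. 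Isolating the exact nondegeneracy assumption under which \eqref{eq:diag-conj} is necessary, and checking that it sits comfortably inside the bounded-approximate-identity framework used for the Peirce calculus, is the delicate heart of the argument.
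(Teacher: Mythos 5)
Your proof follows the same route as the paper's own: the sufficiency direction is the identical expansion of multiplicativity against \eqref{eq:diag-conj}--\eqref{eq:cocycle-tau}, and your necessity argument via Peirce corners reproduces the paper's, with your shear normalization (conjugating $\Phi$ by $s=\bigl[\begin{smallmatrix}1&-\eta\\0&1\end{smallmatrix}\bigr]$ so that both Peirce idempotents are fixed) being a cleaner version of the paper's bookkeeping with the correction terms $\theta_1,\theta_2$. Your normalization even yields a sharper observation absent from the paper: in the unital case the cocycle attached to an actual isomorphism is always inner, $\theta(a,b)=\tau_A(\alpha(a))\cdot\eta-\eta\cdot\tau_B(\beta(b))$.

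The obstacle you single out at the end is a genuine gap, and the paper does not close it either: after deriving the two twisted intertwining relations, the paper's proof simply asserts ``Hence $u$ is an $(\alpha,\beta)$-bimodule isomorphism'' and that ``compatibility on the diagonal forces \eqref{eq:diag-conj},'' with no argument. In fact no faithfulness or nondegeneracy hypothesis can supply one, because the necessity direction of Theorem~\ref{thm:iso} is false as stated. For arbitrary twists $\sigma,\tau$ define
\[
\Phi_0\!\begin{bmatrix}a&x\\[2pt]0&b\end{bmatrix}
=\begin{bmatrix}\tau_A^{-1}\sigma_A(a)&x\\[2pt]0&\tau_B^{-1}\sigma_B(b)\end{bmatrix}.
\]
Then $\Phi_0:\T_\sigma(A,B;X)\to\T_\tau(A,B;X)$ is a bounded algebra isomorphism: the off-diagonal entries of $\Phi_0(T\cdot_\sigma T')$ and of $\Phi_0(T)\cdot_\tau\Phi_0(T')$ both equal $\sigma_A(a)\cdot x'+x\cdot\sigma_B(b')$, since $\tau_A\bigl(\tau_A^{-1}\sigma_A(a)\bigr)=\sigma_A(a)$ and $\tau_B\bigl(\tau_B^{-1}\sigma_B(b')\bigr)=\sigma_B(b')$, while the diagonal entries agree because $\tau_A^{-1}\sigma_A$ and $\tau_B^{-1}\sigma_B$ are multiplicative. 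Hence every $\T_\sigma$ is isomorphic to the untwisted algebra, and $\T_\sigma\cong\T_\tau$ for \emph{all} pairs of twists. Taking $A=\mathbb{C}^2$, $B=\mathbb{C}$, $X=\mathbb{C}^2$ with coordinatewise left action and scalar right action, $\sigma_A=\mathrm{id}$, and $\tau_A$ the coordinate flip, the two are not conjugate in $\Aut(\mathbb{C}^2)\cong\mathbb{Z}/2$, so no data can satisfy \eqref{eq:diag-conj}, yet $\Phi_0$ exists. The same mechanism gives $\T_{(\mathrm{id},\mathrm{id})}\cong\T_{(\sigma_\gamma,\sigma_\gamma)}$ for every $\gamma\in\Aut(G)$, contradicting the dynamical-invariants claim of Example~\ref{ex:L1G}.

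What your computation (and the paper's) actually establishes is the corrected equivalence: an isomorphism exists if and only if there are $\alpha\in\Aut(A)$, $\beta\in\Aut(B)$, a bounded bijection $u:X\to X$, and a bounded bilinear $\theta$ satisfying \eqref{eq:cocycle-tau} together with the twisted intertwining relations $u(\sigma_A(a)\cdot x)=\tau_A(\alpha(a))\cdot u(x)$ and $u(x\cdot\sigma_B(b))=u(x)\cdot\tau_B(\beta(b))$; that is, $u$ is an $(\alpha,\beta)$-relative bimodule isomorphism from $X$ with the $\sigma$-twisted actions onto $X$ with the $\tau$-twisted actions. Your closing diagnosis---that the argument only shows $\sigma$ and $\tau$ act \emph{equivalently on $X$}, not that they are conjugate in $\Aut(A)\times\Aut(B)$---is exactly right; and since the identity map always realizes such an equivalence (absorb the twist into the diagonal via $\alpha=\tau_A^{-1}\sigma_A$, $\beta=\tau_B^{-1}\sigma_B$), conjugacy is not an isomorphism invariant of $\T_\sigma$ at all, so the gap is not repairable by strengthening the module hypotheses.
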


\begin{proof}
By the characteristic property of \(I\) explained above, any isomorphism \(\Phi\) induces a homomorphism \(\bar\Phi:Q\to Q\) with \(\bar\Phi\circ\pi=\pi\circ\Phi\).
Since \(Q\cong A\oplus B\), the map \(\bar\Phi\) restricts to automorphisms \(\alpha\in\Aut(A)\) and \(\beta\in\Aut(B)\) on the two diagonal summands, unless \(X=0\) in which case a permutation of the two factors is also possible.
Write
\[
\Phi\!\begin{bmatrix}a&0\\[2pt]0&0\end{bmatrix}
=\begin{bmatrix}\alpha(a)&\theta_1(a)\\[2pt]0&0\end{bmatrix},
\qquad
\Phi\!\begin{bmatrix}0&0\\[2pt]0&b\end{bmatrix}
=\begin{bmatrix}0&\theta_2(b)\\[2pt]0&\beta(b)\end{bmatrix},
\qquad
\Phi\!\begin{bmatrix}0&x\\[2pt]0&0\end{bmatrix}
=\begin{bmatrix}0&u(x)\\[2pt]0&0\end{bmatrix},
\]
where \(u:X\to X\) is a bounded bijection and \(\theta_1,\theta_2\) are bounded linear maps.
Multiplying the first and third displays in the two possible orders and comparing off-diagonal entries shows
\[
u(\sigma_A(a)\cdot x)=\tau_A(\alpha(a))\cdot u(x)\qquad(a\in A,\ x\in X).
\]
Similarly, multiplying the third and second displays and comparing off-diagonal entries shows
\[
u(x\cdot \sigma_B(b))=u(x)\cdot \tau_B(\beta(b))\qquad(b\in B,\ x\in X).
\]
Hence \(u\) is an \((\alpha,\beta)\)-bimodule isomorphism.
Writing in general
\[
\Phi\!\begin{bmatrix}a&x\\[2pt]0&b\end{bmatrix}
=\begin{bmatrix}\alpha(a)&u(x)+\vartheta(a,b)\\[2pt]0&\beta(b)\end{bmatrix}
\]
for a bounded bilinear \(\vartheta:A\times B\to X\), multiplicativity of \(\Phi\) forces \(\vartheta\) to satisfy \eqref{eq:cocycle-tau} with \(\theta=\vartheta\), while compatibility on the diagonal forces \eqref{eq:diag-conj}.
Conversely, given \((\alpha,\beta,u,\theta)\) satisfying \eqref{eq:diag-conj}--\eqref{eq:cocycle-tau}, the displayed formula defines a bounded bijective homomorphism whose inverse is obtained from \((\alpha^{-1},\beta^{-1},u^{-1},-\theta\circ(\alpha^{-1},\beta^{-1}))\).
\end{proof}
\begin{remark}[Other side and multiplicativity]
For completeness we also verify $\Phi\circ\Psi=\mathrm{id}$ and deduce that $\Psi$ is multiplicative.
Let $a\in A$, $x\in X$, $b\in B$. Using the definitions,
\[
\Psi\!\begin{bmatrix}a&x\\[2pt]0&b\end{bmatrix}
=
\begin{bmatrix}
\alpha^{-1}(a) & u^{-1}\!\bigl(x-\theta(\alpha^{-1}(a),\beta^{-1}(b))\bigr)\\[2pt]
0 & \beta^{-1}(b)
\end{bmatrix}.
\]
Applying $\Phi$ and using linearity of $u$ and the identity $\theta(\alpha^{-1}\alpha(a),\beta^{-1}\beta(b))=\theta(a,b)$, we obtain
\begin{align*}
\Phi\!\left(\Psi\!\begin{bmatrix}a&x\\0&b\end{bmatrix}\right)
&=
\begin{bmatrix}
\alpha\alpha^{-1}(a) &
u\!\left(u^{-1}\!\bigl(x-\theta(\alpha^{-1}(a),\beta^{-1}(b))\bigr)\right)+
\theta\!\left(\alpha\alpha^{-1}(a),\beta\beta^{-1}(b)\right)
\\[2pt]
0 & \beta\beta^{-1}(b)
\end{bmatrix}\\
&=
\begin{bmatrix}
a & x-\theta(\alpha^{-1}(a),\beta^{-1}(b))+\theta(a,b)\\[2pt]
0 & b
\end{bmatrix}
=
\begin{bmatrix}a&x\\[2pt]0&b\end{bmatrix}.
\end{align*}
Thus $\Phi\circ\Psi=\mathrm{id}$ as well as $\Psi\circ\Phi=\mathrm{id}$.
Since $\Phi$ is a homomorphism and bijective, these identities imply that $\Psi$ is a homomorphism:
for any $y,z\in \T_\sigma$,
\[
\Psi(yz)=\Psi\bigl(\Phi(\Psi(y))\,\Phi(\Psi(z))\bigr)
=\Psi\bigl(\Phi(\Psi(y)\Psi(z))\bigr)
=(\Psi\circ\Phi)\bigl(\Psi(y)\Psi(z)\bigr)=\Psi(y)\Psi(z).
\]
Hence the candidate inverse $\Psi$ is multiplicative, completing the verification.
\end{remark}

\begin{example}[Commutative $C(K)$-case]\label{ex:CK}
Let $A=C(K)$, $B=C(L)$ with $K,L$ compact Hausdorff. By the Banach--Stone theorem,
$\Aut(C(K))\cong\mathrm{Homeo}(K)$ and $\Aut(C(L))\cong\mathrm{Homeo}(L)$ \cite{GillmanJerison}.
Thus twists correspond to homeomorphisms $\phi\in\mathrm{Homeo}(K)$ and $\psi\in\mathrm{Homeo}(L)$ via
$\sigma_A(f)=f\circ\phi$ and $\sigma_B(g)=g\circ\psi$.
If $X=C(\Omega)$ is a $C(K)$--$C(L)$ bimodule with
$(f\cdot\xi\cdot g)(\omega)=f(p(\omega))\,\xi(\omega)\,g(q(\omega))$, then
Theorem~\ref{thm:iso} shows that $\T_\sigma(C(K),C(L);C(\Omega))\cong\T_\tau(C(K),C(L);C(\Omega))$
if and only if there exist homeomorphisms $h_K\!:K\to K$, $h_L\!:L\to L$, $H\!:\Omega\to\Omega$ for which
$h_K\circ\phi=\phi'\circ h_K$, $h_L\circ\psi=\psi'\circ h_L$, and $p'\circ H=h_K\circ p$, $q'\circ H=h_L\circ q$.
In particular, the isomorphism type depends only on the underlying dynamics of $(\phi,\psi)$ and the
bimodule anchors $(p,q)$.
\end{example}

\begin{example}[Noncommutative discrete group algebras]\label{ex:L1G}
Let $G$ be a nonabelian discrete group (for instance, the free group $F_2$), and put 
$A=B=X=\ell^1(G)$ with convolution, so the bimodule structure is 
$a\cdot x\cdot b=a*x*b$. 
For any group automorphism $\gamma\in\Aut(G)$ define
\[
\sigma_\gamma(f):=f\circ\gamma^{-1}\qquad(f\in \ell^1(G)).
\]
Then $\sigma_\gamma$ is a Banach algebra automorphism of $\ell^1(G)$ since
\[
\sigma_\gamma(f*g)=(f*g)\circ\gamma^{-1}
=(f\circ\gamma^{-1})*(g\circ\gamma^{-1})
=\sigma_\gamma(f)*\sigma_\gamma(g),
\]
and $\|\sigma_\gamma(f)\|_1=\|f\|_1$ because counting measure is invariant under~$\gamma$.

Choose twists $\sigma=(\sigma_{\gamma_A},\sigma_{\gamma_B})$ and 
$\tau=(\sigma_{\gamma'_A},\sigma_{\gamma'_B})$ with 
$\gamma_A,\gamma_B,\gamma'_A,\gamma'_B\in\Aut(G)$. 
If there exist $\delta_A,\delta_B\in\Aut(G)$ such that
\[
\gamma'_A=\delta_A\,\gamma_A\,\delta_A^{-1},
\qquad
\gamma'_B=\delta_B\,\gamma_B\,\delta_B^{-1},
\]
then with $\alpha=\sigma_{\delta_A}$, $\beta=\sigma_{\delta_B}$ and $u=\sigma_{\delta_A}$ 
(the case $\delta_A=\delta_B$ for simplicity), one obtains an 
$(\alpha,\beta)$--bimodule isomorphism $u$, and Theorem~\ref{thm:iso} 
applies (with $\theta\equiv 0$) to yield
\[
\T_\sigma\bigl(\ell^1(G),\ell^1(G);\ell^1(G)\bigr)
\ \cong\
\T_\tau\bigl(\ell^1(G),\ell^1(G);\ell^1(G)\bigr).
\]
More generally, nontrivial shear cocycles 
$\theta\in Z^1_\tau(A,B;X)$ produce additional automorphisms via $\Lambda_\theta$. 

This example highlights that in the noncommutative setting the twisted theory encodes 
\emph{new invariants}: the isomorphism type of $\T_\sigma$ remembers the conjugacy 
classes of the underlying group automorphisms in $\Aut(G)$, together with shear 
cocycle data. Such dynamical information has no analogue in the untwisted triangular case.
In particular, unlike the untwisted case, the isomorphism classification in the
twisted setting detects the \emph{conjugacy class inside $\Aut(G)$}, so the
moduli of twisted triangular group algebras retain genuinely new dynamical
information.

\end{example}

\begin{lemma}[Transport-of-structure for cocycles]\label{lem:transport}
Assume $\tau_A=\alpha\sigma_A\alpha^{-1}$ and $\tau_B=\beta\sigma_B\beta^{-1}$ and let
$u:X\to X$ be an $(\alpha,\beta)$--bimodule isomorphism. For a bounded bilinear map
$\theta:A\times B\to X$ define
\[
\theta_\sigma(a,b):=u^{-1}\bigl(\theta(\alpha(a),\beta(b))\bigr)\qquad(a\in A,\ b\in B).
\]
Then $\theta$ satisfies the $\tau$--cocycle identity
\[
\theta(aa',bb')=\tau_A(\alpha(a))\cdot\theta(a',b')+\theta(a,b)\cdot\tau_B(\beta(b'))
\]
if and only if $\theta_\sigma$ satisfies the $\sigma$--cocycle identity
\[
\theta_\sigma(aa',bb')=\sigma_A(a)\cdot\theta_\sigma(a',b')+\theta_\sigma(a,b)\cdot\sigma_B(b').
\]
\end{lemma}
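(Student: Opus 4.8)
The plan is to verify the equivalence by a single reversible computation that carries the \(\sigma\)-cocycle identity for \(\theta_\sigma\) across the bijection \(u\) into the \(\tau\)-cocycle identity \eqref{eq:cocycle-tau} for \(\theta\). Because \(u\), \(\alpha\), and \(\beta\) are all bijective, every step will be an equivalence, so it suffices to run the chain in one direction; the converse is then obtained by reading the equalities backwards, or equivalently by applying the statement to the inverse data \((\alpha^{-1},\beta^{-1},u^{-1})\), under which the roles of \(\theta\) and \(\theta_\sigma\) interchange.

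The first preparatory step is to extract the one-sided intertwining relations packaged in the two-sided definition of an element of \(\Aut_{A,B}(X)\). Specializing \(u(a\cdot x\cdot b)=\alpha(a)\cdot u(x)\cdot\beta(b)\) to \(b=1_B\) and to \(a=1_A\), and using that automorphisms fix the unit, gives \(u(a\cdot x)=\alpha(a)\cdot u(x)\) and \(u(x\cdot b)=u(x)\cdot\beta(b)\); inverting these yields \(u^{-1}(c\cdot y)=\alpha^{-1}(c)\cdot u^{-1}(y)\) and \(u^{-1}(y\cdot d)=u^{-1}(y)\cdot\beta^{-1}(d)\). In the merely approximately unital case I would recover the same one-sided relations by inserting a bounded approximate identity and passing to the limit in the multiplier algebra, exactly as the Peirce discussion already presupposes.

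With these relations available, the main step is mechanical. Starting from the \(\sigma\)-cocycle identity for \(\theta_\sigma\), I substitute \(\theta_\sigma(\cdot,\cdot)=u^{-1}\bigl(\theta(\alpha(\cdot),\beta(\cdot))\bigr)\) and push the coefficients \(\sigma_A(a)\) and \(\sigma_B(b')\) through \(u^{-1}\) by the inverse intertwining relations, collecting all three terms under a single application of \(u^{-1}\). Applying the injective map \(u\) then strips it off and leaves a relation purely among values of \(\theta\), in which the coefficients appear as \(\alpha\circ\sigma_A\) and \(\beta\circ\sigma_B\). Here the conjugacy hypotheses from \eqref{eq:diag-conj} enter, rewriting \(\alpha\circ\sigma_A=\tau_A\circ\alpha\) and \(\beta\circ\sigma_B=\tau_B\circ\beta\); since \(\alpha\) and \(\beta\) are surjective, the arguments sweep out all of \(A\) and \(B\), and the resulting relation is exactly \eqref{eq:cocycle-tau}.

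The step I expect to be the real obstacle is the bookkeeping of the composition order of the twists. The intertwining built into \(u\) produces \(\alpha\circ\sigma_A\), and this must be reconciled with the \(\tau\)-twist appearing in \eqref{eq:cocycle-tau} by applying the conjugacy relation in the correct direction, namely as \(\alpha\sigma_A=\tau_A\alpha\); getting this matching right, rather than off by a stray \(\alpha\), is the one place where the computation demands genuine care. A secondary, more routine difficulty is the justification of the one-sided intertwining relations when \(A\) and \(B\) are only approximately unital, since \(\Aut_{A,B}(X)\) is defined by the two-sided identity alone; this is handled cleanly by the approximate-identity/multiplier argument consistent with the conventions used elsewhere in the paper.
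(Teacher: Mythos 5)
Your proposal is correct and is essentially the paper's own proof run in the opposite direction: the paper substitutes $\theta_\sigma=u^{-1}\bigl(\theta(\alpha(\cdot),\beta(\cdot))\bigr)$ into the $\tau$-side identity and pushes coefficients through $u^{-1}$ using the intertwining relations $u(\sigma_A(a)\cdot x)=\tau_A(\alpha(a))\cdot u(x)$ and $u(x\cdot\sigma_B(b))=u(x)\cdot\tau_B(\beta(b))$, noting that the converse is the same argument reversed, while you start from the $\sigma$-side and invoke reversibility; these are the same computation.

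One point deserves attention, and it is exactly the ``stray $\alpha$'' you flagged as the real obstacle --- though it is a wrinkle in the paper as much as in your write-up. After stripping off $u^{-1}$ and applying the conjugacies $\alpha\sigma_A=\tau_A\alpha$, $\beta\sigma_B=\tau_B\beta$, the relation you obtain is
\[
\theta\bigl(\alpha(a)\alpha(a'),\beta(b)\beta(b')\bigr)
=\tau_A(\alpha(a))\cdot\theta(\alpha(a'),\beta(b'))+\theta(\alpha(a),\beta(b))\cdot\tau_B(\beta(b')),
\]
and sweeping with $c=\alpha(a)$, $d=\beta(b)$ turns this into the \emph{pure} $\tau$-cocycle identity $\theta(cc',dd')=\tau_A(c)\cdot\theta(c',d')+\theta(c,d)\cdot\tau_B(d')$, i.e.\ \eqref{eq:cocycle} with $\tau$ in place of $\sigma$. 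This is \emph{not} literally the identity displayed in the lemma (and in \eqref{eq:cocycle-tau}), whose coefficients are $\tau_A(\alpha(a))$ and $\tau_B(\beta(b'))$ acting on untransported arguments; the two conditions on $\theta$ differ in general, since one applies $\tau_A$ and the other $\tau_A\circ\alpha$ to the same first variable. The paper's proof makes the identical silent identification: its second display applies the cocycle identity at the arguments $\alpha(a),\alpha(a'),\beta(b),\beta(b')$ yet writes the coefficient $\tau_A(\alpha(a))$ rather than $\tau_A(\alpha(\alpha(a)))$, which is only legitimate for the pure form. So your argument establishes the clean equivalence ``pure $\tau$-cocycle for $\theta$ $\iff$ pure $\sigma$-cocycle for $\theta_\sigma$,'' exactly as the paper's does; but your closing claim that the swept relation ``is exactly \eqref{eq:cocycle-tau}'' is an overstatement, and a careful write-up should either restate the lemma with the pure $\tau$-identity, or transport instead by $\theta\mapsto u^{-1}\circ\theta$ (without precomposition by $(\alpha,\beta)$), which is the variant that matches the mixed form \eqref{eq:cocycle-tau} appearing in Theorem~\ref{thm:iso}.
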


\begin{proof}
Using $u(\sigma_A(a)\cdot x)=\tau_A(\alpha(a))\cdot u(x)$ and
$u(x\cdot\sigma_B(b))=u(x)\cdot\tau_B(\beta(b))$, we compute
\begin{align*}
\theta_\sigma(aa',bb')&=u^{-1}\!\left(\theta(\alpha(aa'),\beta(bb'))\right)\\
&=u^{-1}\!\left(\tau_A(\alpha(a))\cdot\theta(\alpha(a'),\beta(b'))+\theta(\alpha(a),\beta(b))\cdot\tau_B(\beta(b'))\right)\\
&=\sigma_A(a)\cdot u^{-1}\!\left(\theta(\alpha(a'),\beta(b'))\right)
+u^{-1}\!\left(\theta(\alpha(a),\beta(b))\right)\cdot\sigma_B(b')\\
&=\sigma_A(a)\cdot\theta_\sigma(a',b')+\theta_\sigma(a,b)\cdot\sigma_B(b').
\end{align*}
The converse implication is the same argument in reverse.
\end{proof}

\begin{remark}
Lemma~\ref{lem:transport} shows that writing the cocycle with $\tau$ in Theorem~\ref{thm:iso}
is not ad hoc: it is simply the $\sigma$--cocycle expressed in the transported
coordinates $(\alpha,\beta,u)$. Equivalently, one may formulate Theorem~\ref{thm:iso} using
$\sigma$--cocycles by replacing $\theta$ with $\theta_\sigma$.
\end{remark}
\medskip
\noindent\textit{Inverse verification.}
Define
\[
\Psi\!\begin{bmatrix}a&x\\[2pt]0&b\end{bmatrix}
:=\begin{bmatrix}\alpha^{-1}(a)&\ u^{-1}\!\bigl(x-\theta(\alpha^{-1}(a),\beta^{-1}(b))\bigr)\\[2pt]0&\beta^{-1}(b)\end{bmatrix}.
\]
We check $\Psi\circ\Phi=\mathrm{id}$; the other composition is analogous.
For $a\in A$, $x\in X$, $b\in B$ we have
\[
\Phi\!\begin{bmatrix}a&x\\0&b\end{bmatrix}
=\begin{bmatrix}\alpha(a)&u(x)+\theta(a,b)\\0&\beta(b)\end{bmatrix},
\]
hence
\begin{align*}
\Psi\!\left(\Phi\!\begin{bmatrix}a&x\\0&b\end{bmatrix}\right)
&=\Psi\!\begin{bmatrix}\alpha(a)&u(x)+\theta(a,b)\\0&\beta(b)\end{bmatrix}\\
&=\begin{bmatrix}
\alpha^{-1}\alpha(a) &
u^{-1}\!\bigl(u(x)+\theta(a,b)-\theta(\alpha^{-1}\alpha(a),\beta^{-1}\beta(b))\bigr)\\[2pt]
0 & \beta^{-1}\beta(b)
\end{bmatrix}\\
&=\begin{bmatrix}a&x\\0&b\end{bmatrix},
\end{align*}
since $\theta(\alpha^{-1}\alpha(a),\beta^{-1}\beta(b))=\theta(a,b)$. Thus $\Psi=\Phi^{-1}$.
\hfill$\square$

\subsection*{Shear cohomology in examples}

Recall that a (bounded) $\sigma$–cocycle is a map $\theta:A\times B\to X$
satisfying
\[
\theta(aa',bb')=\sigma_A(a)\cdot \theta(a',b')+\theta(a,b)\cdot \sigma_B(b')\qquad(a,a'\in A,\ b,b'\in B),
\]
and inner cocycles are $\theta_\eta(a,b)=\sigma_A(a)\cdot\eta-\eta\cdot\sigma_B(b)$ with $\eta\in X$.
We write $Z_\sigma^1(A,B;X)$ for the space of $\sigma$–cocycles and
$B_\sigma^1(A,B;X)$ for the inner ones.

\begin{prop}[Scalar case]\label{prop:Z1-scalar}
Let $A=B=\mathbb{K}$ with the usual scalar actions on a Banach space $X$, and let $\sigma_A=\sigma_B=\mathrm{id}$.
Then every (bounded) cocycle $\theta:\mathbb{K}\times\mathbb{K}\to X$ is inner; equivalently,
\[
Z^1_{\mathrm{id}}(\mathbb{K},\mathbb{K};X)=B^1_{\mathrm{id}}(\mathbb{K},\mathbb{K};X)
\quad\text{and}\quad Z^1_{\mathrm{id}}/B^1_{\mathrm{id}}=0.
\]
\end{prop}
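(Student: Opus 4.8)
The plan is to solve the scalar cocycle functional equation directly and show that its only solutions are the inner shears. Specializing the \(\sigma\)-cocycle identity \eqref{eq:cocycle} to \(A=B=\mathbb{K}\), \(\sigma_A=\sigma_B=\mathrm{id}\), and the scalar module actions turns it into the purely algebraic, \(X\)-valued identity
\[
\theta(aa',bb')=a\,\theta(a',b')+b'\,\theta(a,b)\qquad(a,a',b,b'\in\mathbb{K}).
\]
I would first normalize: putting \(a=b=1\) gives \(b'\,\theta(1,1)=0\) for all \(b'\), hence \(\theta(1,1)=0\).

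Next I would isolate the two ``edge'' maps \(g(a):=\theta(a,1)\) and \(k(b):=\theta(1,b)\). Setting \(b=b'=1\) yields \(g(aa')=a\,g(a')+g(a)\); here the key leverage is commutativity of \(\mathbb{K}\): comparing this with \(g(a'a)=a'g(a)+g(a')\) gives \((a-1)\,g(a')=(a'-1)\,g(a)\), and evaluating at \(a'=2\) forces \(g(a)=(a-1)u\) with \(u:=g(2)=\theta(2,1)\). The same argument applied with \(a=a'=1\) gives \(k(b)=(b-1)w\), where \(w:=\theta(1,2)\). This step uses no continuity or boundedness, only the field identity \(2\neq 1\); boundedness is thus inessential here, though harmless to assume.

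Then I would reconstruct \(\theta\) from its edges: substituting \(a'=1\) and \(b=1\) into the identity gives \(\theta(a,b)=a\,k(b)+b\,g(a)=a(b-1)w+b(a-1)u\) for all \(a,b\). Finally I would impose one further instance of the identity to pin down the relation between \(u\) and \(w\): taking \(a=a'=b=b'=2\) forces \(w+u=0\). Substituting \(w=-u\) collapses the \(ab\)-term and leaves \(\theta(a,b)=(a-b)u=\theta_u(a,b)\), which is exactly the inner shear with \(\eta=u\). Hence every cocycle is inner, so \(Z^1_{\mathrm{id}}(\mathbb{K},\mathbb{K};X)=B^1_{\mathrm{id}}(\mathbb{K},\mathbb{K};X)\) and the quotient vanishes.

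The conceptual point to flag is that the inner shears \(\theta_\eta(a,b)=a\eta-b\eta\) are affine-linear, not bilinear, in \((a,b)\); accordingly the content is a statement about the full solution space of the functional equation, not about bilinear forms \(\theta(a,b)=ab\,\xi\) (for which the identity would at once force \(\xi=0\)). The only real obstacle is therefore organizing the functional-equation analysis so as to capture \emph{all} solutions, and the commutativity trick pinning down \(g\) and \(k\) is what makes this transparent; the final consistency check eliminating the spurious \(ab(u+w)\) term is then a one-line substitution.
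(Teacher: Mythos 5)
Your proof is correct, and it shares the paper's overall skeleton: both arguments introduce the edge maps $\theta(\cdot,1)$ and $\theta(1,\cdot)$, reconstruct $\theta(a,b)$ from them via the instance $a'=1$, $b=1$ of the cocycle identity, and then solve one-variable functional equations. However, the way you pin down the edge maps is genuinely different, and it is precisely the step where your version succeeds and the paper's own proof stumbles. The paper claims the edges are \emph{linear}, $f(a)=a\,\eta$ and $g(b)=b\,\eta$ with $\eta:=g(1)$; but $g(1)=\theta(1,1)=0$ (as you observe at the outset), so taken literally that argument would force $\theta\equiv 0$, contradicting the existence of nonzero inner shears $\theta_\eta(a,b)=(a-b)\eta$, whose edges are in fact \emph{affine}: $\theta_\eta(a,1)=(a-1)\eta$ and $\theta_\eta(1,b)=(1-b)\eta$. (The paper's intermediate steps are also off: the instance $(a',b')=(1,1)$ yields only $\theta(1,1)=0$, not the displayed reconstruction formula, and setting $b'=0$ constrains $\theta(\cdot,0)$, not $g$.) Your commutativity trick --- comparing $g(aa')$ with $g(a'a)$ to get $(a-1)g(a')=(a'-1)g(a)$ and evaluating at $a'=2$ --- produces exactly the correct affine forms $(a-1)u$ and $(b-1)w$, and your final substitution $a=a'=b=b'=2$, giving $12(u+w)=8(u+w)$ and hence $u+w=0$, is the consistency check that kills the spurious bilinear term $ab(u+w)$; the paper has no analogue of this step. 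Two further merits of your write-up: you note that boundedness is never used, and you flag that cocycles here cannot be read as bilinear maps --- an important clarification, since the paper's earlier definition of $Z^1_\sigma$ says ``bounded bilinear,'' under which reading the proposition would be vacuous (the only bilinear cocycle is $0$, and inner shears are not bilinear); your reading, which matches the ``Recall'' immediately preceding the proposition, is the one under which the statement is true.
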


\begin{proof}
Let $\theta$ be a cocycle.
Put $f(a):=\theta(a,1)$ and $g(b):=\theta(1,b)$.
Using the cocycle identity with $(a',b')=(1,1)$ gives
\[
\theta(a,b)=a\,g(b)+f(a)-g(1)\,b\qquad(a,b\in\mathbb{K}).
\]
Now set $\eta:=g(1)\in X$.
Then for all $a,b$,
\[
\theta(a,b)=a\,\eta-\eta\,b + \big(a\,[g(b)-g(1)] + [f(a)-f(1)]\big).
\]
Fix $b$ and use the identity with $b'=0$ to see $g$ is affine‐linear with $g(0)=0$, hence $g(b)=b\,\eta$.
Likewise fixing $a$ and setting $a'=0$ shows $f(a)=a\,\eta$.
Therefore $\theta(a,b)=a\,\eta-\eta\,b=\theta_\eta(a,b)$ is inner.
\end{proof}

\begin{prop}[A commutative $C(K)$--$C(L)$ guideline]\label{prop:Z1-CK}
Let $A=C(K)$, $B=C(L)$, $X=C(\Omega)$ with the standard module structure
$(f\cdot\xi\cdot g)(\omega)=f(p(\omega))\,\xi(\omega)\,g(q(\omega))$,
and let $\sigma_A(f)=f\circ\phi$, $\sigma_B(g)=g\circ\psi$.
Assume $p,q$ are surjective and the module actions are faithful.
If $\theta:C(K)\times C(L)\to C(\Omega)$ is a (bounded) $\sigma$–cocycle that is separately weak* continuous on bounded sets, then $\theta$ is inner:
there exists $\eta\in C(\Omega)$ such that
\[
\theta(f,g)= (f\circ\phi)\,\eta - \eta\,(g\circ\psi).
\]
In particular, $Z^1_\sigma/B^1_\sigma=0$ under these hypotheses.
\end{prop}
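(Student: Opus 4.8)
\medskip
\noindent\textit{Proof plan.}
The plan is to prove innerness by \emph{localizing} $\theta$ at each point of $\Omega$, where the twisted module actions collapse to multiplication by evaluation characters, reducing the statement to the character version of Proposition~\ref{prop:Z1-scalar}, and then to reassemble the pointwise data into a single continuous implementing function $\eta$.

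First I would fix $\omega\in\Omega$ and evaluate the $\sigma$-cocycle identity \eqref{eq:cocycle} at $\omega$. Writing $\chi_\omega=\mathrm{ev}_{\phi(p(\omega))}$ and $\rho_\omega=\mathrm{ev}_{\psi(q(\omega))}$, the module formula $(f\cdot\xi\cdot g)(\omega)=f(p(\omega))\,\xi(\omega)\,g(q(\omega))$ turns the twisted actions into scalars, namely $(\sigma_A(f)\cdot\xi)(\omega)=\chi_\omega(f)\,\xi(\omega)$ and $(\xi\cdot\sigma_B(g))(\omega)=\xi(\omega)\,\rho_\omega(g)$. Hence $\theta_\omega:=\mathrm{ev}_\omega\circ\theta$ is a scalar cocycle for the one-dimensional bimodule $\mathbb{K}$ with $C(K),C(L)$ acting through $\chi_\omega,\rho_\omega$; the twist has been absorbed into the characters, so this is exactly the scalar setting of Proposition~\ref{prop:Z1-scalar}, and it suffices to show each $\theta_\omega$ is inner with an implementing constant depending continuously on $\omega$.

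Next I would split $\theta$ into its left and right components $D(f):=\theta(f,0)$ and $E(g):=\theta(0,g)$, so that $\theta(f,g)=D(f)+E(g)$ (recall that a $\sigma$-cocycle is linear as a map $A\oplus_1 B\to X$, this being precisely the additivity and homogeneity making $\Lambda_\theta$ a homomorphism, so the splitting is legitimate and the inner cocycles $\theta_\eta$ indeed lie in $Z^1_\sigma$). Feeding $g=g'=0$ and then $f=f'=0$ into \eqref{eq:cocycle} gives the twisted multiplicativity relations $D(ff')=\sigma_A(f)\cdot D(f')$ and $E(gg')=E(g)\cdot\sigma_B(g')$. Evaluating the first at $\omega$ yields $d_\omega(ff')=\chi_\omega(f)\,d_\omega(f')$; using commutativity of $C(K)$ to symmetrize and putting $f'=1_K$ (so $\chi_\omega(1_K)=1$) forces $D(f)(\omega)=\chi_\omega(f)\,\eta(\omega)$ with $\eta:=\theta(1_K,0)\in C(\Omega)$, and symmetrically $E(g)(\omega)=\rho_\omega(g)\,\eta'(\omega)$ with $\eta':=\theta(0,1_L)$. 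The leftover cross-relation extracted from \eqref{eq:cocycle}, namely $\sigma_A(f)\cdot E(g')=-\,D(f)\cdot\sigma_B(g')$, evaluated at $f=1_K$, $g'=1_L$ gives $\eta'=-\eta$. Assembling, $\theta(f,g)(\omega)=\eta(\omega)\bigl(\chi_\omega(f)-\rho_\omega(g)\bigr)=\bigl((f\circ\phi)\,\eta-\eta\,(g\circ\psi)\bigr)(\omega)$, so $\theta=\theta_\eta$ is inner and $Z^1_\sigma/B^1_\sigma=0$.

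The step I expect to be the main obstacle is not the pointwise algebra but the \emph{global coherence}: passing from the family of pointwise implementing scalars to a single continuous $\eta\in C(\Omega)$, and controlling points where an anchor character degenerates. Both are resolved cleanly by identifying $\eta$ with the fixed element $\theta(1_K,0)\in C(\Omega)$, which is continuous by construction and whose value under every $\chi_\omega$ is read off through the unit, so no degenerate locus obstructs the argument. Here the surjectivity of $p,q$ (equivalently, faithfulness of the actions) is what makes $\eta$ the \emph{unique} implementing element, identifying $B^1_\sigma$ with $C(\Omega)$; and the separate weak* continuity on bounded sets is the regularity that licenses any reduction to a weak*-dense or point-separating testing family, becoming essential if one relaxed unitality or worked with a merely dense subalgebra. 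I would therefore record the reduction to Proposition~\ref{prop:Z1-scalar} as the conceptual heart, with the continuity of $\eta$ as the one point demanding genuine care.
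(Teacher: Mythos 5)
Your argument is correct and in fact more complete than the paper's, which offers only an ``Idea of proof.'' Both begin identically --- evaluate the cocycle identity at $\omega\in\Omega$, turning the twisted actions into the characters $\mathrm{ev}_{\phi(p(\omega))}$ and $\mathrm{ev}_{\psi(q(\omega))}$, in the spirit of Proposition~\ref{prop:Z1-scalar} --- but the mechanisms for producing $\eta$ diverge. The paper manufactures $\eta(\omega)$ pointwise via Urysohn functions and faithfulness of the actions, then leans on the separate weak* continuity hypothesis to upgrade the resulting scalar field to a continuous function. You instead exploit unitality: the splitting $\theta(f,g)=\theta(f,0)+\theta(0,g)$, the specializations $g=g'=0$ and $f=f'=0$, and the cross-relation $\sigma_A(f)\cdot E(g')=-D(f)\cdot\sigma_B(g')$ identify $\theta=\theta_\eta$ with $\eta=\theta(1_K,0)\in C(\Omega)$ already continuous, so the Urysohn step, faithfulness, surjectivity of $p,q$, and weak* continuity are never used. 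Your route thus gives an elementary, fully rigorous proof of a formally stronger statement; the paper's route is the one that would survive relaxing unitality (dense subalgebras, non-unital anchors), which is presumably why those hypotheses appear in the proposition at all. One point to flag explicitly: your splitting step presupposes that a cocycle is additive in the pair $(f,g)$, i.e.\ linear on $A\oplus_1 B$, rather than bilinear in the literal sense of the paper's wording; your justification is the right one, since $\Lambda_\theta$ can only be a linear homomorphism for such $\theta$ and the inner cocycles $\theta_\eta$ have exactly this form, whereas literal bilinearity combined with the cocycle identity would force $\theta\equiv 0$ in the unital case and render the proposition vacuous. Two harmless quibbles: the ``commutativity/symmetrization'' remark is unnecessary (plugging $f'=1_K$ suffices), and uniqueness of $\eta$ already follows from unitality alone (take $f=1_K$, $g=0$), not from faithfulness of the actions.
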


\begin{proof}[Idea of proof]
Fix $\omega\in\Omega$ and evaluate the cocycle identity at $\omega$.
Using Urysohn functions supported near $p(\omega)$ or $q(\omega)$ and the faithfulness of the actions, one reduces to the scalar case pointwise (cf.\ Proposition~\ref{prop:Z1-scalar}) to obtain a continuous $\eta(\omega)$ with the required property.
The separate weak* continuity furnishes measurability and then continuity of $\eta$.
\end{proof}

\begin{remark}
Proposition~\ref{prop:Z1-CK} is stated in a form that keeps the proof short; it can be sharpened by relaxing the continuity hypothesis on $\theta$ (e.g.\ via bilinear factorization techniques).
It already shows that in a broad commutative setting the shear cohomology is trivial, so the moduli of twists are governed by the diagonal dynamics and bimodule equivalences, not by genuine $\sigma$–cocycles.
\end{remark}

\section{Cohomological dimension}

We now turn to the homological properties of twisted triangular Banach algebras.
Throughout we work with continuous Hochschild cohomology in the sense of Helemskii \cite{HelemskiiHomology}.
The cohomological dimension $\mathrm{cd}(A)$ of a Banach algebra $A$ is, by definition, the smallest integer $n$ (if such exists) such that $H^{m}(A,E^*)=0$ for all Banach $A$-bimodules $E$ and all $m>n$.
Equivalently, $\mathrm{cd}(A)$ is the projective dimension of $A$ in the derived category of Banach $A$-bimodules.
This invariant measures how far $A$ is from being homologically trivial: $\mathrm{cd}(A)=0$ precisely when $A$ is amenable.

For the twisted triangular algebra $\T_\sigma$ we recall the short exact sequence
\[
0\longrightarrow I \longrightarrow \T_\sigma \xrightarrow{\pi} Q \longrightarrow 0,
\]
where $I=\{\bigl[\begin{smallmatrix}0&x\\ 0&0\end{smallmatrix}\bigr]:x\in X\}$ and $Q=A\oplus_1 B$, and $I^2=0$.
By Helemskii’s general theory of extensions with square--zero ideal \cite[§31]{HelemskiiHomology}, one obtains a long exact sequence in Hochschild cohomology which relates the cohomology of $\T_\sigma$ with that of $Q$ and of $I$.
In particular, the connecting morphisms arising from $H^m(Q,X^*)$ to $H^{m+1}(\T_\sigma,E^*)$ show that the cohomological dimension of $\T_\sigma$ can exceed that of $Q$ by at most one.

\begin{theorem}\label{thm:cd}
For any twist $\sigma$ one has
\[
\max\bigl\{\mathrm{cd}(A),\ \mathrm{cd}(B),\ \mathbf{1}_{\{X\neq0\}}\bigr\}
\ \le\
\mathrm{cd}\bigl(\T_\sigma(A,B;X)\bigr)
\ \le\
\max\bigl\{\mathrm{cd}(A),\ \mathrm{cd}(B),\ 1\bigr\}.
\]
Moreover, if $X$ is acyclic as a $Q$--bimodule (for instance, if $H^m(Q,X^*)=0$ for all $m\ge1$), then
\[
\mathrm{cd}(\T_\sigma)=\max\{\mathrm{cd}(A),\mathrm{cd}(B)\}.
\]
\end{theorem}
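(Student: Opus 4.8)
The plan is to realize $\T_\sigma$ as the split null extension $\T_\sigma\cong Q\ltimes I$, where $Q=A\oplus_1 B$ and the closed ideal $I\cong X$ (with $I^2=0$) carries the $\sigma$-twisted $Q$-bimodule structure $(a,b)\cdot x\cdot(a',b')=\sigma_A(a)\cdot x\cdot\sigma_B(b')$, the diagonal inclusion $\iota\colon Q\to\T_\sigma$ splitting $\pi$. All three assertions are then read off the long exact sequence in continuous Hochschild cohomology attached to the square-zero extension $0\to I\to\T_\sigma\xrightarrow{\pi}Q\to 0$ in Helemskii's framework \cite{HelemskiiHomology}. As a preliminary I record $\mathrm{cd}(Q)=\max\{\mathrm{cd}(A),\mathrm{cd}(B)\}$, the standard $\ell^1$-sum computation in which the cross Peirce blocks of any $Q$-bimodule are Hochschild-acyclic and the cohomology localizes on the diagonal summands.

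For the lower bound I would argue in two independent steps. First, since $\pi\circ\iota=\mathrm{id}_Q$, the algebra $Q$ is a retract of $\T_\sigma$: for every dual $Q$-bimodule $E^*$ the inflation $\pi^*\colon H^n(Q,E^*)\to H^n(\T_\sigma,(E^*)^\pi)$ is split by $\iota^*$, so vanishing of the latter forces vanishing of the former, whence $\mathrm{cd}(Q)\le\mathrm{cd}(\T_\sigma)$. Second, when $X\neq 0$ the off-diagonal projection $D\begin{bmatrix}a&x\\0&b\end{bmatrix}=\begin{bmatrix}0&x\\0&0\end{bmatrix}$ is a bounded derivation of $\T_\sigma$ into $I$ (a one-line check from $I^2=0$ and the twisted product); composed with the canonical $I\hookrightarrow I^{**}$ it is a derivation into a dual bimodule which, because $I$ acts trivially on $I^{**}$, is inner only if $X=0$. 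Hence $H^1(\T_\sigma,I^{**})\neq 0$ and $\mathrm{cd}(\T_\sigma)\ge 1$. Together these give the stated lower bound.

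For the upper bound I would feed the extension into the cohomology sequence, which relates $H^m(\T_\sigma,E^*)$ to $H^m(Q,E^*)$ and to terms $H^{m-1}(Q,\mathcal L(I,E^*))$, where $\mathcal L(I,E^*)\cong (I\hat\otimes E)^*$ is again a dual $Q$-bimodule. Both flanking groups vanish once the degree exceeds $\mathrm{cd}(Q)$, and the degree shift in the connecting morphism is what accounts for a single extra unit: this delivers $\mathrm{cd}(\T_\sigma)\le\max\{\mathrm{cd}(A),\mathrm{cd}(B)\}+1$ \emph{directly}. Sharpening the $+1$ to the asserted $\max\{\mathrm{cd}(A),\mathrm{cd}(B),1\}$ requires showing that the ideal term $H^{m-1}(Q,\mathcal L(I,E^*))$ already vanishes in the critical degree $m-1=\mathrm{cd}(Q)$ whenever $\mathrm{cd}(Q)\ge 1$, i.e. that $I$ contributes cohomology only in degrees $\le 1$; this is precisely what the acyclicity hypothesis is meant to control.

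I expect the crux to lie exactly in this degree bookkeeping for the ideal contribution, and I would isolate it as the single delicate step. The square-zero condition alone pins the $I$-part to the connecting morphism but does not annihilate its value in degree $\mathrm{cd}(Q)$; what is needed is genuine homological negligibility of $I\cong X$ as a $Q$-bimodule in that degree (acyclicity, or bimodule-projectivity of $X$), after which the sequence collapses to $H^m(\T_\sigma,E^*)\cong H^m(Q,E^*)$ for all $m>\max\{\mathrm{cd}(A),\mathrm{cd}(B)\}$ and returns the equality $\mathrm{cd}(\T_\sigma)=\max\{\mathrm{cd}(A),\mathrm{cd}(B)\}$. I would therefore arrange the write-up so that the acyclicity input enters only here, making transparent the one place where more than the square-zero structure is required; absent such an input the method yields only the coarser $+1$ estimate, and I would flag this as the boundary of what the extension-theoretic argument can prove.
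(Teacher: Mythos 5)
Your proposal follows the same skeleton as the paper's proof: view $\T_\sigma$ as the split square--zero extension $0\to I\to\T_\sigma\to Q\to0$, extract the upper bound from the associated long exact sequence, and get the lower bound from the diagonal. Two of your refinements are genuine improvements on the paper's text: you justify $\mathrm{cd}(Q)\le\mathrm{cd}(\T_\sigma)$ via the splitting $\pi\circ\iota=\mathrm{id}_Q$ (inflation followed by restriction is the identity on cohomology), whereas the paper only says $A$ and $B$ ``embed'', which by itself proves nothing about $\mathrm{cd}$; and you exhibit an explicit witness for $\mathrm{cd}(\T_\sigma)\ge1$ when $X\neq0$ (the off--diagonal projection, pushed into $I^{**}$, is a non-inner derivation into a dual bimodule because $I$ annihilates $I^{**}$), where the paper only gestures at ``a degree--one obstruction''. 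Both arguments are correct as you state them.

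The point where you stop short of the claimed statement is exactly the point where the statement is false, so your caution is vindicated rather than a defect. The long exact sequence yields $\mathrm{cd}(\T_\sigma)\le\mathrm{cd}(Q)+1$, and this cannot be sharpened to $\max\{\mathrm{cd}(Q),1\}$: take $A=T_2(\mathbb{C})$ (upper triangular $2\times2$ matrices), $B=\mathbb{C}$, $X=\mathbb{C}$ with left action through the character $a\mapsto a_{22}$, right action by scalars, and trivial twist. Here $\mathrm{cd}(A)=1$ (its algebraic length-one resolution by finite-dimensional projective bimodules is admissible and Banach-projective) and $\mathrm{cd}(B)=0$, so the claimed upper bound equals $1$. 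But $\Lambda:=\T_{\mathrm{id}}(A,B;X)\cong T_3(\mathbb{C})/\mathbb{C}e_{13}$, and the admissible square--zero extension $0\to\mathbb{C}e_{13}\to T_3(\mathbb{C})\to\Lambda\to0$ has no multiplicative splitting: after conjugating lifted idempotents to the standard $e_{ii}$, any splitting $s$ must send $\bar e_{12},\bar e_{23}$ to $e_{12},e_{23}$, giving $s(\bar e_{12})s(\bar e_{23})=e_{13}\neq0=s(\bar e_{12}\bar e_{23})$. Hence $H^2(\Lambda,\mathbb{C}e_{13})\neq0$ with finite-dimensional (hence dual) coefficients, so $\mathrm{cd}(\Lambda)\ge2$. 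The paper's proof commits precisely the leap you refused to make: ``contributions from $I$ occur only in degree one higher than those for $Q$'' delivers the $+1$ bound, not the asserted maximum.

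The one genuine error in your proposal is the final collapse claim for the ``Moreover'' clause. The ideal terms in the sequence carry coefficients $\mathcal{L}(I,E^*)\cong(I\hat\otimes E)^*$, which vary with $E$; the hypothesis $H^m(Q,X^*)=0$ for $m\ge1$ concerns the single module $X^*$ and cannot annihilate them. Worse, the asserted conclusion $\mathrm{cd}(\T_\sigma)=\max\{\mathrm{cd}(A),\mathrm{cd}(B)\}$ contradicts the lower bound you yourself proved: for $A=B=\mathbb{C}$, $X=\mathbb{C}$, the quotient $Q\cong\mathbb{C}^2$ is amenable, so the acyclicity hypothesis holds automatically (indeed $X$ is even projective as a $Q$-bimodule), yet $\T_{\mathrm{id}}=T_2(\mathbb{C})$ has $\mathrm{cd}=1\neq0$ --- your own derivation witnesses this. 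So the ``Moreover'' clause is inconsistent with the first half of the theorem and cannot be proved by any bookkeeping of the sequence; the correct move is the one you made for the upper bound, namely to flag it as beyond reach (because false), rather than to assert the collapse.
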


\begin{proof}
Since $Q=A\oplus B$, its cohomological dimension is the maximum of those of $A$ and $B$.
Applying the long exact sequence to the short exact sequence $0\to I\to \T_\sigma\to Q\to 0$ with $I^2=0$ shows that possible contributions from $I$ occur only in degree one higher than those for $Q$.
This gives the upper bound.
For the lower bound, note first that both $A$ and $B$ embed diagonally in $\T_\sigma$, so the cohomological dimension of $\T_\sigma$ must be at least that of $A$ and $B$.
Moreover, if $X\neq 0$, then the ideal $I$ is nontrivial and contributes at least a degree--one obstruction, so $\mathrm{cd}(\T_\sigma)\ge 1$.
Finally, if $X$ is $Q$--acyclic, then the connecting morphisms vanish and the long exact sequence splits into short exact pieces, so the cohomological dimension of $\T_\sigma$ coincides exactly with $\max\{\mathrm{cd}(A),\mathrm{cd}(B)\}$.
\end{proof}
\begin{cor}\label{cor:cd-exact}
For any twist $\sigma$ one has the exact formula
\[
\mathrm{cd}\bigl(\T_\sigma(A,B;X)\bigr)
\ =\
\max\!\bigl\{\mathrm{cd}(A),\ \mathrm{cd}(B),\ \mathbf{1}_{\{X\neq 0\}}\bigr\}.
\]
\end{cor}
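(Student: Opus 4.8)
The plan is to read the exact formula straight off the two-sided estimate already supplied by Theorem~\ref{thm:cd}, splitting on whether the off-diagonal bimodule $X$ vanishes. The theorem gives
\[
\max\{\mathrm{cd}(A),\mathrm{cd}(B),\mathbf{1}_{\{X\neq0\}}\}\ \le\ \mathrm{cd}(\T_\sigma)\ \le\ \max\{\mathrm{cd}(A),\mathrm{cd}(B),1\},
\]
so the only slack between the lower and upper bounds sits in the term $\mathbf{1}_{\{X\neq0\}}$ against $1$, and that slack is nonempty exactly when $X=0$.

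First I would handle $X\neq0$. Here $\mathbf{1}_{\{X\neq0\}}=1$, so the two bounds of Theorem~\ref{thm:cd} collapse to the single value $\max\{\mathrm{cd}(A),\mathrm{cd}(B),1\}$; this forces $\mathrm{cd}(\T_\sigma)$ to equal that value, which is precisely the right-hand side of the corollary once we substitute $\mathbf{1}_{\{X\neq0\}}=1$. No acyclicity hypothesis and no special property of $\sigma$ enters here.

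The case $X=0$ is the only one carrying real content, since there the theorem leaves a gap of one: when $A$ and $B$ are both amenable the lower bound reads $0$ while the upper bound reads $1$. To close it I would not argue cohomologically but structurally. When $X=0$ the off-diagonal slot disappears, the twisted products $\sigma_A(a)\cdot x'$ and $x\cdot\sigma_B(b')$ never occur, and $\T_\sigma$ is literally the diagonal algebra $Q=A\oplus_1 B$ for every twist $\sigma$. I then invoke the identity $\mathrm{cd}(A\oplus_1 B)=\max\{\mathrm{cd}(A),\mathrm{cd}(B)\}$ already used in the proof of Theorem~\ref{thm:cd}, which rests on the standard splitting of continuous Hochschild cohomology over a direct product of Banach algebras. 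Because $\mathbf{1}_{\{X\neq0\}}=0$ in this case, this value again coincides with the claimed right-hand side.

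Putting the two cases together gives the uniform identity for every $\sigma$, with the usual convention $\max\{\infty,\dots\}=\infty$ covering infinite dimensions. I expect the main obstacle to be conceptual rather than computational: one has to see that the apparent non-sharpness of Theorem~\ref{thm:cd} is merely an artifact of the $X=0$ degeneration, resolved by the structural identification $\T_\sigma=Q$ instead of by any vanishing of connecting maps.
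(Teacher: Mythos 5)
Your proof is correct, and at the decisive point it takes a genuinely different --- and in fact more watertight --- route than the paper. For $X\neq 0$ you read the value directly off the collapse of the two bounds in Theorem~\ref{thm:cd}; the paper instead re-derives the lower bound from scratch (diagonal embeddings give $\mathrm{cd}(\T_\sigma)\ge\max\{\mathrm{cd}(A),\mathrm{cd}(B)\}$, and non-amenability of $\T_\sigma$ when $X\neq 0$, cited from the literature, gives $\mathrm{cd}(\T_\sigma)\ge 1$), which buys independence from the informal ``degree-one obstruction'' step inside the theorem's proof, but is otherwise the same squeeze. The real divergence is the case $X=0$: the paper splits on $\max\{\mathrm{cd}(A),\mathrm{cd}(B)\}$ and, when $\mathrm{cd}(A)=\mathrm{cd}(B)=0$, asserts that the bounds $\mathbf{1}_{\{X\neq 0\}}\le \mathrm{cd}(\T_\sigma)\le 1$ ``again force equality'' --- but for $X=0$ these read $0\le\mathrm{cd}(\T_\sigma)\le 1$ and force nothing. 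Your structural observation that for $X=0$ the algebra $\T_\sigma$ is literally $Q=A\oplus_1 B$ for every twist (the twisted products never occur), combined with $\mathrm{cd}(A\oplus_1 B)=\max\{\mathrm{cd}(A),\mathrm{cd}(B)\}$ as already used in the proof of Theorem~\ref{thm:cd}, is exactly what is needed to settle that corner; in other words, your argument repairs a gap that the paper's own proof leaves open.
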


\begin{proof}
The upper bound $\mathrm{cd}(\T_\sigma)\le \max\{\mathrm{cd}(A),\mathrm{cd}(B),1\}$ is Theorem~\ref{thm:cd}.
For the lower bound, $A$ and $B$ embed in $\T_\sigma$ via the diagonal,
so $\mathrm{cd}(\T_\sigma)\ge \max\{\mathrm{cd}(A),\mathrm{cd}(B)\}$.
If $X\neq 0$, then $\T_\sigma$ is not amenable (see e.g.\ \cite{BehnamianMahmoodi2020};
cf.\ \cite[Ch.~VII]{HelemskiiHomology}), hence $\mathrm{cd}(\T_\sigma)\ge 1$.
Combining these gives
\[
\mathrm{cd}(\T_\sigma)\ \ge\ \max\{\mathrm{cd}(A),\mathrm{cd}(B),\mathbf{1}_{\{X\neq 0\}}\}.
\]
If $\max\{\mathrm{cd}(A),\mathrm{cd}(B)\}\ge 1$, the upper and lower bounds coincide.
If $\mathrm{cd}(A)=\mathrm{cd}(B)=0$, then the bounds reduce to
$1_{\{X\neq 0\}}\le \mathrm{cd}(\T_\sigma)\le 1$, which again forces equality.
\end{proof}
\begin{example}[Amenable diagonals]\label{ex:cd-amenable-diagonals}
Suppose $A$ and $B$ are amenable Banach algebras (e.g.\ $C(K)$, $M_n(\mathbb{C})$,
or $L^1(G)$ with $G$ amenable). Then $\mathrm{cd}(A)=\mathrm{cd}(B)=0$, and
Corollary~\ref{cor:cd-exact} gives
\[
\mathrm{cd}\bigl(\T_\sigma(A,B;X)\bigr)=
\begin{cases}
0, & X=0,\\[2pt]
1, & X\neq 0.
\end{cases}
\]
In particular, every nontrivial twisted triangular extension of amenable diagonals has
cohomological dimension exactly $1$.
\end{example}

\begin{example}[Group algebras]\label{ex:cd-group}
Let $A=L^1(G_1)$ and $B=L^1(G_2)$ for locally compact groups $G_1,G_2$.
If $G_1$ and $G_2$ are amenable, then $\mathrm{cd}(A)=\mathrm{cd}(B)=0$ and
Example~\ref{ex:cd-amenable-diagonals} applies, so $\mathrm{cd}(\T_\sigma)=1$ whenever $X\neq 0$.
If at least one of $G_1,G_2$ is nonamenable, then
$\max\{\mathrm{cd}(A),\mathrm{cd}(B)\}\ge 1$, and hence
\[
\mathrm{cd}(\T_\sigma)=\max\{\mathrm{cd}(A),\mathrm{cd}(B)\}
\]
by Corollary~\ref{cor:cd-exact}.
Thus the twisted triangular extension never raises the cohomological dimension beyond
the larger of the two group algebras (except for the minimal jump to~$1$ when both are~$0$).
\end{example}

\begin{example}[Finite-dimensional diagonal blocks]\label{ex:cd-fd}
Let $A=M_n(\mathbb{C})$ and $B=M_m(\mathbb{C})$ with their operator norms, and
let $X=M_{n,m}(\mathbb{C})$ with the natural bimodule actions.
Then $\mathrm{cd}(A)=\mathrm{cd}(B)=0$, so
\[
\mathrm{cd}\bigl(\T_\sigma(M_n(\mathbb{C}),M_m(\mathbb{C});M_{n,m}(\mathbb{C}))\bigr)=1,
\]
independently of the twist $\sigma$.
This gives an explicit noncommutative example with $\mathrm{cd}=1$.
\end{example}

\begin{remark}[On the acyclicity clause]
The equality in Corollary~\ref{cor:cd-exact} shows that the “acyclicity of $X$” hypothesis
stated after Theorem~\ref{thm:cd} is not needed to obtain the exact value of $\mathrm{cd}(\T_\sigma)$.
That condition is still a natural mechanism for equality in extension problems, but here the
square--zero structure forces equality unconditionally.
\end{remark}

\begin{example}
Let $A=B=C(K)$ with $K$ compact Hausdorff.
It is well known that $C(K)$ is amenable, hence $\mathrm{cd}(C(K))=0$ \cite[§VII.2]{HelemskiiHomology}.
If $X=C(\Omega)$ is a commutative $C(K)$--$C(K)$ bimodule, then the above theorem implies
\[
\mathrm{cd}\bigl(\T_\sigma(C(K),C(K);C(\Omega))\bigr)=
\begin{cases}
0 & \text{if } X=0,\\
1 & \text{if } X\neq 0,
\end{cases}
\]
since the off--diagonal ideal forces an increase to at least one in the nontrivial case.
Thus even for very classical Banach algebras, the introduction of a nonzero off-diagonal module automatically produces homological complexity.
\end{example}
\section{Amenability perspective}

The connection between cohomological dimension and amenability is well known:
a Banach algebra $A$ is amenable if and only if $\mathrm{cd}(A)=0$
\cite[Thm.~2.5]{JohnsonMemoir}.
In addition, in the presence of a bounded approximate identity this is
equivalent to the existence of a bounded approximate diagonal \cite{JohnsonAJM}.
Amenability is therefore the strongest possible homological simplification,
while nonzero cohomological dimension reflects the presence of obstructions
to splitting derivations.

For twisted triangular Banach algebras, Theorem~\ref{thm:cd}
immediately yields a structural restriction.
Indeed, if $\mathrm{cd}(\T_\sigma)=0$, then both $A$ and $B$ must be amenable
and moreover $X=0$.
Hence genuine amenability of $\T_\sigma$ can occur only in the trivial
direct sum case with no off--diagonal bimodule.
When $X\neq 0$, one always has $\mathrm{cd}(\T_\sigma)\ge 1$,
so $\T_\sigma$ is not amenable.

\begin{prop}\label{prop:weak-amen}
Suppose $A$ and $B$ are amenable Banach algebras.
Then $\T_\sigma(A,B;X)$ is weakly amenable if and only if $X=0$.
\end{prop}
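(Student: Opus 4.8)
The plan is to separate the two implications: the forward direction is routine, while the converse carries all of the content. First I would dispose of the case \(X=0\). Here \(\T_\sigma(A,B;0)=A\oplus_1 B=Q\), and since amenability is stable under finite \(\ell^1\)-direct sums, \(Q\) is amenable whenever \(A\) and \(B\) are. As amenability implies weak amenability, \(\T_\sigma\) is weakly amenable; this yields the ``if'' direction and matches the \(\mathrm{cd}=0\) endpoint of Corollary~\ref{cor:cd-exact}.

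For the converse I would argue contrapositively: assuming \(X\neq0\), I would try to exhibit a bounded non-inner derivation \(D\colon\T_\sigma\to\T_\sigma^{*}\). The natural framework is the split null extension \(\T_\sigma=Q\ltimes I\), in which \(Q=A\oplus B\) acts on the square-zero ideal \(I\cong X\) through the twisted one-sided actions \((a,b)\cdot x=\sigma_A(a)\cdot x\) and \(x\cdot(a,b)=x\cdot\sigma_B(b)\). Dualizing \(0\to I\to\T_\sigma\to Q\to0\) produces a short exact sequence of \(\T_\sigma\)-bimodules
\[
0\longrightarrow Q^{*}\longrightarrow \T_\sigma^{*}\longrightarrow X^{*}\longrightarrow 0,
\]
on which \(I\) acts trivially from both sides, so that \(Q^{*}\) and \(X^{*}\) are pullbacks along \(\pi\) of dual \(Q\)-bimodules. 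The plan is then to feed this into the long exact cohomology sequence and to use amenability of \(Q\) to pin down the outer terms: a bounded derivation from \(\T_\sigma\) into the \(\pi\)-pullback of a \(Q\)-bimodule \(E\) splits as a \(Q\)-derivation on the diagonal copy of \(Q\) together with a bounded \(Q\)-bimodule homomorphism \(I\to E\), and the diagonal part is inner because \(Q\) is amenable. One is thereby reduced to the bounded \(Q\)-bimodule homomorphisms \(I\to Q^{*}\) and \(I\to X^{*}\) and to the connecting maps, and to locating the desired non-inner class among this data.

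The step I expect to be the main obstacle is precisely the production, and the verification of non-innerness, of \(D\). The reduction above shows that amenability of the diagonal annihilates the ``obvious'' contributions: the twisted, strictly one-sided actions (left by \(\sigma_A(A)\), right by \(\sigma_B(B)\)) force the relevant \(Q\)-bimodule homomorphisms out of \(I\) to be severely constrained, so a witnessing derivation cannot be read off from the diagonal data alone and must instead be extracted from the connecting morphism or from an explicit cocycle built from a nonzero \(\xi\in X^{*}\) and the Peirce idempotents \(e_{11},e_{22}\) (passing to multipliers when \(A,B\) merely have bounded approximate identities). Controlling such a cocycle up to inner perturbations is delicate rather than routine: already the smallest model \(\T_{\mathrm{id}}(\mathbb{K},\mathbb{K};\mathbb{K})\), the algebra of upper-triangular \(2\times2\) matrices, shows that the candidate shear derivations coming from \(\xi\) and the idempotents are themselves inner. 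I therefore anticipate that a correct argument requires an additional nondegeneracy input on the bimodule \(X\) — a faithful or trace-like pairing detecting \(X^{*}\) — and that isolating exactly which hypothesis makes the connecting class survive, rather than any norm estimate, is where the difficulty concentrates.
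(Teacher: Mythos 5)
Your treatment of the ``if'' direction is correct and coincides with the paper's: when $X=0$ one has $\T_\sigma(A,B;0)=A\oplus_1 B$, which is amenable, hence weakly amenable. The genuine gap is that the converse --- which carries the entire content of the proposition --- is never proved. What you give is a reduction scheme (dualize $0\to I\to\T_\sigma\to Q\to 0$, use amenability of $Q$ to kill diagonal contributions, chase the long exact sequence) followed by an explicit admission that you cannot produce, nor verify the non-innerness of, the required derivation $D\colon\T_\sigma\to\T_\sigma^*$. A plan that ends at ``locating the desired non-inner class among this data'' is not a proof, so the ``only if'' implication remains unestablished.

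That said, your diagnosis of the obstacle is not just correct but decisive, and you should push your own $T_2$ observation one step further: it is a \emph{counterexample} to the proposition as stated, not a hint that a cleverer cocycle is needed. Take $A=B=X=\mathbb{K}$ with scalar actions; since $\Aut(\mathbb{K})=\{\mathrm{id}\}$, the only admissible twist is $\sigma=(\mathrm{id},\mathrm{id})$, and $\T_\sigma(\mathbb{K},\mathbb{K};\mathbb{K})$ is the upper triangular algebra $T_2$. A direct check on the basis $e_{11},e_{12},e_{22}$ shows that every bounded derivation $D\colon T_2\to T_2^*$ has the form $D(e_{11})=\beta e_{12}^*$, $D(e_{22})=-\beta e_{12}^*$, $D(e_{12})=\beta(e_{22}^*-e_{11}^*)$, which is exactly the inner derivation implemented by $\phi=-\beta e_{12}^*$. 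So $T_2$ is weakly amenable although $A,B$ are amenable and $X\neq 0$; the converse is false, and no nondegeneracy hypothesis of the kind you propose can repair it (the scalar actions here are already faithful). For comparison, the paper's own proof founders at precisely the step you flagged: it asserts that for $X\neq 0$ the ideal $I$ ``admits nontrivial derivations into $I^*$'' and concludes that $H^1(\T_\sigma,I^*)\neq 0$, hence weak amenability fails. Both steps break down. In the $T_2$ example every derivation into $I^*$ is inner (such derivations are $(a,x,b)\mapsto p(a-b)\,\xi$ with $\xi$ spanning $I^*$, implemented by $-p\,\xi$); and even if $H^1(\T_\sigma,I^*)$ were nonzero, this would not refute weak amenability, since weak amenability concerns $H^1(\T_\sigma,\T_\sigma^*)$ and $I^*$ is only a quotient bimodule of $\T_\sigma^*$, so a nonzero class in $H^1(\T_\sigma,I^*)$ need not lift along the quotient map $\T_\sigma^*\to I^*$. (Note also that the paper's remark immediately after the proposition, conceding that untwisted triangular algebras can be weakly amenable with $X\neq 0$, contradicts the proposition itself, because the identity twist is allowed in its hypotheses.) In short: your proposal is incomplete, but the missing step is missing for a reason --- the statement needs stronger hypotheses (or should be read only as the positive-direction Proposition~\ref{prop:WA-sufficient}), and your analysis, carried to its conclusion, refutes it rather than proves it.
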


\begin{proof}
If $X=0$, then $\T_\sigma\cong A\oplus B$, which is weakly amenable
since each summand is.
Conversely, if $X\neq 0$, then the square--zero ideal
$I=\{\bigl[\begin{smallmatrix}0&x\\0&0\end{smallmatrix}\bigr]:x\in X\}$
admits nontrivial derivations into $I^*$ coming from the module structure
(cf.\ \cite{ForrestMarcouxWA}).
These derivations persist under twisting, so $H^1(\T_\sigma,I^*)\neq 0$,
and weak amenability fails.
\end{proof}

\begin{remark}
Proposition~\ref{prop:weak-amen} shows that when the diagonal blocks are
amenable, twisting does not rescue weak amenability in the presence of a
nonzero off--diagonal module.
This contrasts sharply with the untwisted theory, where weak amenability
can survive even when $X\neq 0$
\cite{ForrestMarcouxWA,MoslehianTriangular}.
Further, Monfared \cite{Monfared2008}, Esslamzadeh--Shojaee
\cite{EsslamzadehShojaee2011}, and Nasr-Isfahani \cite{NasrIsfahani2013}
established persistence of weak amenability in broader classes of
generalized matrix Banach algebras.
Thus our result identifies a genuine new obstruction created by twisting:
it forces the failure of weak amenability in the most classical
“amenable diagonal” situation.
\end{remark}
\begin{prop}[Twisted Forrest–Marcoux criterion; a sufficient condition]\label{prop:WA-sufficient}
Let $A$ and $B$ be Banach algebras and $X$ a Banach $A$--$B$ bimodule.
Give $X^*$ the $A$--$B$ bimodule structure induced by $\sigma=(\sigma_A,\sigma_B)$:
\[
(a\cdot\varphi\cdot b)(x):=\varphi\bigl(\sigma_A(a)\cdot x\cdot \sigma_B(b)\bigr)\qquad(a\in A,\,b\in B,\,x\in X,\,\varphi\in X^*).
\]
Assume
\[
H^1(A,A^*)=0,\qquad H^1(B,B^*)=0,\qquad H^1\bigl(A,X^*\bigr)=0,\qquad H^1\bigl(B,X^*\bigr)=0
\]
with respect to these module actions. Then $\T_\sigma(A,B;X)$ is weakly amenable.
\end{prop}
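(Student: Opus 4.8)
The plan is to run the Forrest--Marcoux block analysis \cite{ForrestMarcouxWA} directly at the level of cocycles, so that no unit is needed and the twist is carried explicitly throughout. Identify $\T_\sigma^*\cong A^*\oplus X^*\oplus B^*$ as Banach spaces, and as a first step I would record the induced $\T_\sigma$-bimodule actions on the three summands. The essential observation is that the diagonal copies of $A$ and $B$ act on the $X^*$-summand through exactly the $\sigma$-twisted actions of the statement, namely $(a\cdot\varphi)(x)=\varphi(\sigma_A(a)\cdot x)$ and $(\varphi\cdot b)(x)=\varphi(x\cdot\sigma_B(b))$, while $e_{22}\T_\sigma e_{11}=0$ forces the opposite off-diagonal corner of $\T_\sigma^*$ to vanish. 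I also record the standard fact that weak amenability of $A$ and of $B$ forces $\overline{A^2}=A$ and $\overline{B^2}=B$; this mild nondegeneracy is what replaces the Peirce normalization in the possibly non-unital setting. Now fix a bounded derivation $D\colon\T_\sigma\to\T_\sigma^*$; the goal is to exhibit $\Xi\in\T_\sigma^*$ with $D=\operatorname{ad}_\Xi$, where $\operatorname{ad}_\Xi(s)=s\cdot\Xi-\Xi\cdot s$.

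First I would restrict $D$ along the homomorphic diagonal embeddings $\iota_A\colon A\to\T_\sigma$ and $\iota_B\colon B\to\T_\sigma$ and split each value into its $A^*$-, $X^*$- and $B^*$-coordinates. The $A^*$-coordinate of $D\circ\iota_A$ is a bounded derivation $A\to A^*$, hence inner by $H^1(A,A^*)=0$; symmetrically the $B^*$-coordinate of $D\circ\iota_B$ is inner by $H^1(B,B^*)=0$. Subtracting the inner derivations of $\T_\sigma$ implemented by the corresponding functionals (viewed inside $A^*,B^*\subset\T_\sigma^*$) — which, as one checks from the action table, annihilate the $X$-block — reduces to the case where these two coordinates vanish. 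The cross coordinates $A\to B^*$ and $B\to A^*$ are derivations into modules with trivial action, so they vanish on $\overline{A^2}=A$ and $\overline{B^2}=B$ respectively; equivalently, $e_{22}\T_\sigma e_{11}=0$ annihilates the relevant products.

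The crux is the remaining pair of $X^*$-coordinates $d_A\colon A\to X^*$ of $D\circ\iota_A$ and $d_B\colon B\to X^*$ of $D\circ\iota_B$. Here the twist enters decisively: when $X^*$ is equipped with the one-sided $A$- (resp.\ $B$-) module structure induced from $\T_\sigma$, which by the computation above is precisely the $\sigma$-twisted action displayed in the statement, $d_A$ and $d_B$ become genuine bounded derivations into $X^*$. This is exactly the transport-of-structure mechanism of Lemma~\ref{lem:transport}, and it is the reason the actions on $X^*$ are defined with $\sigma_A,\sigma_B$ inserted. The hypotheses $H^1(A,X^*)=0$ and $H^1(B,X^*)=0$ now make $d_A$ and $d_B$ inner, implemented by elements of $X^*$; subtracting the associated inner shears of $\T_\sigma$ I may further assume $d_A=d_B=0$, so that $D$ vanishes on both diagonal blocks.

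It remains to treat $D$ on the square-zero ideal $I\cong X$. Using $I^2=0$ together with the Leibniz rule applied to $\iota_A(a)\cdot_\sigma\iota_X(x)=\iota_X(\sigma_A(a)\cdot x)$ and $\iota_X(x)\cdot_\sigma\iota_B(b)=\iota_X(x\cdot\sigma_B(b))$, and the vanishing of $D$ on the diagonal just arranged, one pins down $D|_X$: its coordinates are intertwined with the twisted one-sided actions and, invoking $\overline{A^2}=A$ and $\overline{B^2}=B$, are forced into the corner of $\T_\sigma^*$ that the triangular structure annihilates, so $D|_X=0$. Since $\T_\sigma$ is the linear sum of $\iota_A(A)$, $\iota_B(B)$ and $I$, a derivation vanishing on all three is zero; hence the original $D$ is the sum of the inner derivations subtracted along the way, so $D$ is inner, $H^1(\T_\sigma,\T_\sigma^*)=0$, and $\T_\sigma(A,B;X)$ is weakly amenable. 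I expect the main obstacle to be the third step together with the compatibility of the successive subtractions: one must verify that the $X^*$-valued restrictions are cocycles for \emph{precisely} the $\sigma$-twisted module structures (so the two hypotheses $H^1(A,X^*)=0$, $H^1(B,X^*)=0$ apply verbatim), and that removing the $A^*$- and $B^*$-contributions does not reintroduce $X^*$-components while the shears removing $d_A,d_B$ do not disturb the earlier normalizations — a bookkeeping issue resolved by choosing every implementing functional inside the correct summand of $\T_\sigma^*$, whose one-sided actions were identified at the outset.
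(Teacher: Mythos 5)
Your proposal follows the same route as the paper's (the paper's own proof is only a sketch): the Forrest--Marcoux block analysis of a bounded derivation $D:\T_\sigma\to\T_\sigma^*\cong A^*\oplus X^*\oplus B^*$, with the diagonal coordinates removed by $H^1(A,A^*)=H^1(B,B^*)=0$, the $X^*$-valued coordinates of $D\circ\iota_A$, $D\circ\iota_B$ removed by the twisted hypotheses, and inner derivations subtracted along the way. In fact you go further than the paper: the cross coordinates $A\to B^*$, $B\to A^*$, the non-unital normalization via $\overline{A^2}=A$, $\overline{B^2}=B$ (correctly deduced from weak amenability of $A$ and $B$), and the restriction $D|_I$ are all steps the paper's sketch passes over in silence, and your action table and the observation that functionals in the $A^*$- and $B^*$-summands implement inner derivations vanishing on $I$ are accurate.

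The genuine gap is the final claim ``so $D|_X=0$'', and it cannot be closed from the stated hypotheses. Write $D|_I=(f_2,\varphi_2,g_2)$ with values in $A^*\oplus X^*\oplus B^*$. Your Leibniz-plus-density argument does kill $f_2$ and $g_2$, but for the middle coordinate it only yields that $\varphi_2:X\to X^*$ vanishes on $\overline{A\cdot X+X\cdot B}$ and takes values annihilating that subspace; none of the four $H^1$ hypotheses (which concern derivations from $A$ and $B$, not maps from $X$) says anything about $\varphi_2$ on a complement. Indeed the proposition as stated is false without a nondegeneracy assumption on $X$: take $A=B=\mathbb{K}$ and $X\neq 0$ with the zero module actions. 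Then all four cohomology groups vanish (the twisted actions on $X^*$ are zero, and derivations of a unital algebra into a zero-action module vanish), yet any nonzero bounded linear $\varphi_2:X\to X^*$, extended by zero on the diagonal, is a bounded derivation on $\T_\sigma$ (every product lands in the kernels of all functionals involved), while for \emph{every} $\Xi\in\T_\sigma^*$ the $X^*$-coordinate of $\operatorname{ad}_\Xi$ on $I$ is identically zero --- the pairings $X^*\times X$ land in the $A^*$ and $B^*$ slots --- so this derivation is not inner. The missing ingredient is essentiality, $\overline{A\cdot X+X\cdot B}=X$ (automatic for unital bimodules over unital algebras, the setting of \cite{ForrestMarcouxWA}); with it, $\varphi_2=0$ follows immediately from your relations $\varphi_2(\sigma_A(a)\cdot x)=\varphi_2(x\cdot\sigma_B(b))=0$, and your argument closes. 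A second, smaller defect: your proposed bookkeeping fix (``choose implementing functionals in the correct summand'') does not decouple $d_A$ from $d_B$, because a single $\varphi\in X^*$ implements an inner derivation acting on \emph{both}, via $a\mapsto -\varphi(\sigma_A(a)\cdot(\,\cdot\,))$ and $b\mapsto \varphi((\,\cdot\,)\cdot\sigma_B(b))$ simultaneously; to kill the pair $(d_A,d_B)$ by one shear you must invoke the compatibility relation coming from $\iota_B(b)\cdot_\sigma\iota_A(a)=0$ (which forces the two separately obtained implementing functionals to agree on $A\cdot X\cdot B$), again together with an essentiality hypothesis. To be fair, both defects are inherited from the paper: its sketch never examines $D|_I$ or the interference of the subtractions, so your attempt is the more complete of the two --- but as written it proves a statement that is false in the stated generality.
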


\begin{proof}[Proof sketch]
Write $\T_\sigma$ with its Peirce decomposition and let $D:\T_\sigma\to \T_\sigma^*$ be a bounded derivation.
Decompose $D$ into corner components via the canonical projections associated to $e_{11},e_{22}$.
A routine block‐by‐block computation (as in the untwisted triangular case, cf.\ \cite{ForrestMarcouxWA,MoslehianTriangular}) shows:
(i) the diagonal pieces restrict to derivations $A\to A^*$ and $B\to B^*$, hence vanish by $H^1(A,A^*)=H^1(B,B^*)=0$; 
(ii) the off–diagonal piece is a sum of a derivation $A\to X^*$ and a derivation $B\to X^*$ for the $\sigma$–twisted module structure on $X^*$, hence is inner by $H^1(A,X^*)=H^1(B,X^*)=0$.
Subtracting a suitable inner derivation eliminates the off–diagonal part, whence $D$ itself is inner.
\end{proof}
\begin{remark}
Proposition~\ref{prop:WA-sufficient} generalizes the classical triangular results
of Forrest--Marcoux \cite{ForrestMarcouxWA} and Moslehian \cite{MoslehianTriangular}:
in the untwisted case these conditions are exactly those guaranteeing weak
amenability. Our twisted version shows that the same mechanism continues to
operate once the cocycle is incorporated. Moreover, the persistence phenomena
observed by Monfared \cite{Monfared2008}, Esslamzadeh--Shojaee
\cite{EsslamzadehShojaee2011}, and Nasr--Isfahani \cite{NasrIsfahani2013} for
generalized matrix algebras also fit naturally into this framework. Thus the
twisted setting is not an isolated curiosity, but part of the same continuity
of results across the triangular and generalized matrix literature.
\end{remark}

\begin{remark}
Proposition~\ref{prop:WA-sufficient} extends the classical triangular criteria
(e.g.\ \cite{ForrestMarcouxWA,MoslehianTriangular}) to the twisted setting.
It gives a \emph{positive} result complementary to Proposition~\ref{prop:weak-amen}:
when the diagonal algebras are weakly amenable and the only $\sigma$–twisted
derivations into $X^*$ are inner, then $\T_\sigma$ is weakly amenable.
In particular, weak amenability of $\T_\sigma$ in the nonamenable‐diagonal regime
reduces to understanding the twisted cohomology groups $H^1(A,X^*)$ and $H^1(B,X^*)$.
\end{remark}

\begin{example}[The $\ell^1(\mathbb{Z})$ case]\label{ex:l1z}
Let $A=B=\ell^1(\mathbb{Z})$ with convolution, which is amenable by
Johnson’s theorem.
Hence $\mathrm{cd}(A)=\mathrm{cd}(B)=0$.
If $X=\ell^1(\mathbb{Z})$ with bimodule action $a\cdot x\cdot b=a*x*b$, then
Corollary~\ref{cor:cd-exact} and Proposition~\ref{prop:weak-amen} give
\[
\mathrm{cd}(\T_\sigma)=1
\qquad\text{and}\qquad
\T_\sigma \text{ is not weakly amenable whenever } X\neq 0.
\]
\end{example}

\begin{example}[Noncommutative case and open question]\label{ex:l1f2}
Let $G=F_2$ be the free group on two generators.
Then $A=B=L^1(G)$ are nonamenable Banach algebras, so
$\mathrm{cd}(A),\mathrm{cd}(B)\ge 1$.
By Corollary~\ref{cor:cd-exact} we have
\[
\mathrm{cd}(\T_\sigma)=\max\{\mathrm{cd}(L^1(G)),1\}=\mathrm{cd}(L^1(G)).
\]
The precise weak amenability behaviour of $\T_\sigma(L^1(G),L^1(G);L^1(G))$
remains open.
In particular, it is not clear whether twisting imposes additional
obstructions beyond those already present in $L^1(G)$ itself.
This illustrates a genuinely noncommutative direction where further
investigation is needed.
\end{example}

\section{Conclusion}

We have introduced and analyzed the class of twisted triangular Banach algebras
$\T_\sigma(A,B;X)$. Our results show that twisting has two distinct effects:
first, it introduces new obstructions to amenability (Proposition~\ref{prop:weak-amen}),
ensuring that $\T_\sigma$ fails to be amenable whenever $X\neq 0$ even if the
diagonals are amenable; and second, it yields a new cohomological framework
(Proposition~\ref{prop:WA-sufficient}) where weak amenability can persist under
explicit conditions on twisted derivations. Beyond these structural results,
we have also computed the shear cohomology groups $Z^1_\sigma/B^1_\sigma$ in
concrete families (Propositions~\ref{prop:Z1-scalar}--\ref{prop:Z1-CK}),
demonstrating how the theory can be made explicit. Together these contributions
clarify both the obstructions and the survivals of homological properties under
twisting.

From a homological perspective, we established that the cohomological dimension 
of \(\T_\sigma\) is controlled by that of the diagonal algebras together with the 
square--zero ideal $I$, and that twisting does not disrupt the extension 
structure underlying Helemskii’s long exact sequence. In particular, 
\(\T_\sigma\) is amenable only in the degenerate direct--sum case \(X=0\) with 
$A$ and $B$ amenable, whereas for \(X\neq 0\) one always has 
\(\mathrm{cd}(\T_\sigma)\ge 1\). Nevertheless, weak amenability may persist, as 
indicated by existing results in the classical \cite{ForrestMarcouxWA,MoslehianTriangular}
and twisted settings \cite{BehnamianMahmoodi2020}, as well as in more recent work on 
generalized matrix algebras \cite{Monfared2008,EsslamzadehShojaee2011,NasrIsfahani2013}. 

Future directions include computing the shear cohomology groups 
\(Z^1_\sigma/B^1_\sigma\) for specific families (commutative $C(K)$, group 
algebras of nonabelian groups, etc.), and developing a systematic theory of weak 
amenability in the twisted framework. These problems promise to further clarify 
the role of automorphism dynamics and cohomology in shaping the structure and 
analysis of Banach algebra extensions.

\end{document}